\numberwithin{equation}{section}
\newtheorem{lem}{Lemma}[section]
\newtheorem{pro}{Proposition}[section]
\newtheorem{theo}{Theorem}[section]
\theoremstyle{remark}
\newtheorem{rem}{Remark}
\renewcommand{\L}{\mathscr{L}}
\newcommand{\D}{\mathcal{D}}
\renewcommand{\H}{\mathcal{H}}
\newcommand{\N}{\mathbb{N}}
\renewcommand{\Im}{\mathrm{Im}}
\newcommand{\e}{\varepsilon}
\newcommand{\M}{\mathcal M_{c}}
\newcommand{\Mc}[1]{\mathcal M^{#1}_{c}}
\def\e{\varepsilon}
\def\E{\mathbb{E}}
\def\I{\mathbf{1}}
\def\R{\mathbb{R}}
\DeclareMathOperator*{\argmin}{\arg\min}
\begin{document}

	\title{Estimation via length-constrained generalized empirical principal curves  under small noise}
	\author{Sylvain Delattre $\&$ Aurélie Fischer\footnote{The research work of this author has been partially supported by ANR-18-IDEX-0001, IdEx Université de Paris.}}
	\maketitle

	\begin{abstract}
	Let $g:[0,1]\to \R^d$ be a rectifiable curve, and,	for $n\geq 1$, let $U_i^n$, $i=1,\dots,n$, denote independent random variables taking their values in $[0,1]$, with full support. Consider the model
			$$X_i^n=g(U_i^n)+\e_i^n, \quad i=1,\dots,n,$$ where $g$ is unknown and the noise tends to 0 in probability (in a way to be specified).  We are interested in the estimation of the image of $g$.
	
	Given a square integrable random vector $X$, let  $f:\left[0,1\right]\rightarrow\mathbb{R}^d$ be a minimizer of $$\Delta(f)=E\left[\min_{t\in[0,1]}V(|X-f(t)|)\right]$$ over all curves with length not greater than a certain threshold. Here, $V:\R_+\to\R_+$ is a lower semi-continuous strictly increasing function. For instance, $V(x)=x^p$, where $p>0$, or $V(x)=\frac{x}{1+x}$.

Similarly,   an empirical optimal curve associated to $X_1^n, \dots, X_n^n$ may be defined as a minimizer, under length-constraint, of the criterion
  $$\Delta_n(f)=\frac{1}{n}\sum_{i=1}^{n}\min_{t\in[0,1]}V(|X^n_i- f(t)|).$$  In this paper, we propose a method to build a sequence of generalized empirical principal curves, with selected length, so that, in Hausdorff distance, the images of the estimating principal curves  converge in probability  to the image of $g$.

	\end{abstract}

\textit{Keywords} --  Curve estimation, additive noise, principal curves, length constraint.

\medskip

\textit{2010 Mathematics Subject Classification}: 62G05.

	\section{Introduction}

	\subsection{Preliminary picture of the estimation result}

	Let $n\geq 1$. We observe random vectors $X_i^n$, given by 
	 \begin{equation}
	X_i^n=g(U_i^n)+\e_i^n, \quad i=1,\dots,n,\label{eq:mod}
	\end{equation} where the unknown function $g:[0,1]\to \R^d$ is continuous. Moreover, $g$ is assumed to have finite length equal to its 1-dimensional Hausdorff measure and to have constant speed. Here, the random variables $U^n_i$, $i=1,\dots,n$, taking their values in $[0,1]$, are independent, and belong to a class of  distributions  with full support, enclosing for instance the  uniform distribution as a particular case.

	 We study an asymptotic context, where the noise tends in probability to 0 (in a sense that will be specified below)  when the number of observations $n$ tends to infinity.
 	
	The main result of this paper is the construction, relying on the principal curve notion, of an estimator $\hat{f}_n$,  which converges  to the unknown curve $g$  in Hausdorff distance, in the sense that the Hausdorff  distance between $\Im \hat{f}_n$ and  $\Im g$ converges in probability to 0.

	\subsection{Related work}
	
	The problem of estimating the image of $g$ may be cast  into the general context of filament or manifold estimation from observations sampled on or near the unknown shape.
	
	The  literature mainly focuses on shapes with a reach bounded away from zero. The reach $\rho$, characterizing the regularity of the shape, is the maximal radius of a ball rolling on it (see \cite{Fed}).  
	In	\cite{GPVW}, an additive noise model of the form \eqref{eq:mod} is studied. The curve $g$ 	is parameterized by arc-length, normalized to $[0, 1]$. The authors assume that the $U_i$, $i=1,\dots,n$, have a common density with respect to the Lebesgue measure on $[0,1]$,  bounded and bounded away from zero. 
	The  noise has support in a ball $B(0,\sigma)$, with $\sigma <\rho(g)$, and admits a bounded density with respect to the Lebesgue measure, which is continuous on $\mathring{B}(0,\sigma)$, nondecreasing and symmetric, with a regularity condition on the boundary of the support. For an open curve (with endpoints), in addition,  $|f(1)-f(0)|/2>\sigma$. In the plane  $\R^2$, the assumptions made allow  to estimate the support $S$ of the distribution of the observations, the boundary of this set $S$, in order to find its medial axis, which  is the closure of
	the set of points in $S$ that have at least two closest points in the boundary $\partial S$. In the same article, the authors also consider clutter noise, corresponding to the situation where one observes points sampled from a mixture density $(1-\eta)u(x)+\eta h(x)$, where $u$ is the uniform density over some compact set, and $h$ is the density of points on the shape.
		Another additive model is investigated in \cite{GPVW1}, for the estimation of manifolds without boundary, with dimension lower than the dimension of the ambient space,  contained in a compact set. The model may be written 
		\begin{equation}
		X_i=G_i+\e_i, \quad i=1,\dots,n,
		\end{equation}
		where the random vectors $G_i$  are drawn uniformly on the shape $M$, and the noise is drawn  uniformly on the normal to the manifold, at distance at most $\sigma<\rho(M)$.
		The article \cite{GPVW2} is also dedicated to manifold estimation, under reach condition, first in a noiseless model, where the observations are exactly sampled on the manifold, according to some density with respect to the uniform distribution on the manifold, and then in the presence of clutter noise. An additive noise model, with known Gaussian noise, is examined as well. This latter case is related to density deconvolution.
		Estimating manifolds without boundary, with low dimension and a lower bound on the reach, is also the purpose of \cite{aamari2018,aamari2019}. The points sampled on the manifold have a common density with respect to the $d$-dimensional Hausdorff measure of the manifold, which is bounded and bounded away from zero.
	In \cite{aamari2018}, estimation relies on Tangential Delaunay Complexes. It  is performed 
	in  the noiseless case, with additive noise, bounded by $\sigma$, and under clutter noise. \cite{aamari2019} deal with compact manifolds belonging to particular regularity classes. The authors examine the noiseless situation, as well as centered bounded noise perpendicular to the manifold. Estimators based on local polynomials are proposed.
	
	To sum up, all these models involve strong conditions on the noise, which is either bounded,  or of type clutter noise.
			Such assumptions allow the authors to derive rates of convergence.
						Here, we investigate a different situation, with a weak assumption on the noise.
					 In particular, the noise does not need to be bounded. Regarding the regularity of the curve $g$, which has constant speed, there is no reach assumption, and $g$   is not required to be injective.	Although rates of convergence cannot be expected here, this weak framework is worth studying, since it is not obvious at first sight that it is even possible to build a convergent estimator without knowledge of either  length or noise.

	 The estimation strategy relies on generalized empirical principal curves.

	\subsection{Extension of the notion of length-constrained principal curve}
	
	The notion of principal curve with length constraint has been proposed by \cite{KKLZ}. According to this definition, if  $X$ denotes a random  vector with finite second moment, a principal curve is  a continuous map  $f^*:\left[0,1\right]\rightarrow\mathbb{R}^d$ minimizing under a length constraint the quantity \begin{equation}
	E\Big[\min_{t\in[0,1]}|X- f(t)|^2\Big]= E\Big[d(X,\mathrm{Im} f)^2\Big],\label{eq:pc}
	\end{equation} where $|\cdot|$ denotes the Euclidean norm and $d$ stands for the Euclidean distance to a set.
		This optimization problem may also be seen as a version of the ``average distance problem'' studied in the calculus of variations community  (see, e.g., \cite{BOS,BS03}).
Originally, a principal curve was defined by \cite{HasStuet} as a self-consistent curve, that is, a curve $f$ satisfying
$f(t_f(X))=\E[X|t_f(X)]\quad \mbox{a.s.} ,$ with $t_f$ given by $t_f(x)=\max\argmin_{t} |x-f(t)| .$  In addition to self-consistency, smoothness conditions were required: the principal curve  has to  be of class $C^\infty$, it does not intersect itself, and has finite length inside any ball in $\R^d$.  \cite{Tib} revisited the problem as a mixture model, which forces the curve $g$ in models of the form \eqref{eq:mod} to be a principal curve.
The  point of view by \cite{KKLZ}, where no  smoothness assumption is made,  was  motivated in particular by the fact that the existence of  principal curves defined in terms of self-consistency was only proved for a few particular examples (see \cite{DSextr96,DSgeo96}). Note that principal curves introduced by \cite{KKLZ} include polygonal lines.

As stated in the next lemma, shown in Section \ref{section:app-ex}, existence of optimal curves is still guaranteed when replacing the squared Euclidean distance in the definition \eqref{eq:pc}  by more general distortion measures. 

\begin{lem}\label{lem:exist}Let $V:[0,\infty)\to[0,\infty)$ be a lower semi-continuous, strictly increasing function, continuous at 0, and such that $V(0)=0$. Let $X$ denote a random vector such that $E[V(|X|)]<\infty$. Then, for any finite length $L$, there exists a curve $f^*_L:[0,1]\to\R^d$ with length $\L(f^*_L)\leq L$ minimizing over all curves with length at most $ L$ the criterion 
	$$ \Delta(f)= E\Big[V(d(X,\mathrm{Im} f))\Big].$$
	\end{lem}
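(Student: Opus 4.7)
The strategy is to apply the direct method of the calculus of variations to a minimizing sequence. First, the infimum $m^* := \inf\{\Delta(f) : \L(f) \leq L\}$ is finite because the constant curve $f \equiv 0$ satisfies $\Delta(f) = E[V(|X|)] < \infty$; in particular $m^* \leq E[V(|X|)]$. I would pick an admissible sequence $(f_n)$ with $\Delta(f_n) \to m^*$, and, using that $\Delta$ depends only on $\Im f_n$, reparameterize each $f_n$ by constant speed $\L(f_n)$ on $[0,1]$, so that every $f_n$ is Lipschitz with constant at most $L$.

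Next I would confine the sequence to a bounded region. Let $V(\infty) := \lim_{r\to\infty} V(r) \in (0,\infty]$. Since $V$ is strictly increasing and $|X| < \infty$ almost surely, one has $V(|X|) < V(\infty)$ a.s., hence $E[V(|X|)] < V(\infty)$ and therefore $m^* < V(\infty)$. If $d(0, \Im f_n)$ were unbounded, extract a subsequence along which $d(0, \Im f_n) \to \infty$; then $d(X, \Im f_n) \geq d(0, \Im f_n) - |X| \to \infty$ a.s., and because $V$ is nondecreasing with limit $V(\infty)$, Fatou's lemma yields $\liminf \Delta(f_n) \geq V(\infty) > m^*$, a contradiction. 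Hence $(d(0, \Im f_n))_n$ is bounded by some $R$, and since $\mathrm{diam}\,\Im f_n \leq \L(f_n) \leq L$, all images lie inside the closed ball $\bar B(0, R+L)$.

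With $(f_n)$ both $L$-Lipschitz and uniformly bounded, Arzel\`a--Ascoli provides a subsequence converging uniformly on $[0,1]$ to some $f^*$. The limit $f^*$ is itself $L$-Lipschitz, so $\L(f^*) \leq L$ and $f^*$ is admissible. Uniform convergence of $f_n$ to $f^*$ implies convergence of $\Im f_n$ to $\Im f^*$ in Hausdorff distance, so by $|d(x,A)-d(x,B)| \leq d_H(A,B)$ one has $d(X, \Im f_n) \to d(X, \Im f^*)$ pointwise. Lower semi-continuity of $V$ (with continuity at $0$ handling the case $d(X,\Im f^*)=0$) gives $V(d(X, \Im f^*)) \leq \liminf_n V(d(X, \Im f_n))$ almost surely, and Fatou's lemma then produces $\Delta(f^*) \leq \liminf_n \Delta(f_n) = m^*$, so $f^*$ is a minimizer.

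The main obstacle is the confinement step: if $V$ is bounded, translating a curve far away only raises the criterion to $V(\infty)$, so non-compactness through drift to infinity is a real threat, and the key ingredient is the strict inequality $m^* < V(\infty)$ (coming from the strict monotonicity of $V$ together with $E[V(|X|)] < \infty$) combined with the monotone-limit behaviour of $V$ at $\infty$. The remaining compactness and lower semi-continuity steps are routine applications of Arzel\`a--Ascoli and Fatou, exploiting the hypotheses that $V$ is lower semi-continuous and $V(0)=0$.
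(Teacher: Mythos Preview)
Your proof is correct and follows essentially the same direct-method approach as the paper: reparameterize a minimizing sequence by constant speed to get a uniformly $L$-Lipschitz family, establish confinement via the strict inequality $m^* \le E[V(|X|)] < \sup V = V(\infty)$ (ruling out drift to infinity), extract a uniformly convergent subsequence by Arzel\`a--Ascoli, and conclude by lower semi-continuity of $\Delta$ through Fatou. If anything, your write-up is a bit more explicit than the paper's in justifying the strict inequality $E[V(|X|)]<V(\infty)$ and in invoking Arzel\`a--Ascoli; the parenthetical about continuity of $V$ at $0$ is harmless but unnecessary, since $V\ge 0=V(0)$ already gives the needed lower bound there.
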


The motivation for introducing this generalized  notion of principal curves is that this allows for greater flexibility in the way we measure distances. This framework encloses for instance as particular cases the power functions  $V(x)=x^p$, $p>0$. An appropriate choice of $V$ may enhance robustness. A typical example in this regard  is the function defined by $V(x)=\frac{x}{1+x}$.
	
	In a statistical context, one has at hand independent observations $X_1,\dots,X_n$, and an empirical principal curve is defined as a minimizer, under a length constraint, of the criterion   $$\frac{1}{n}\sum_{i=1}^{n}d\left(X_i,\mathrm{Im} f\right)^2.$$ Similarly, a generalized empirical principal curve may be obtained by minimizing $$\frac{1}{n}\sum_{i=1}^{n}V(d\left(X_i,\mathrm{Im} f\right)) .$$
	Observe that, in this case,  existence of a minimizer is more straightforward 
	since the empirical measure is compactly supported.

		\subsection{Organization of the paper}
		The manuscript is organized as follows. In Section \ref{section:not}, we set up notation and introduce more formally the model. In Section \ref{section:res},  we state and prove the main result: we build a sequence of generalized empirical principal curves converging  to the curve to be estimated in Hausdorff distance. The proof is structured in two subsections. The first one gathers results around the Cauchy-Crofton formula, which allows to show a useful fact about the considered class of sampling distributions on $[0,1]$.
		The proof of the existence Lemma \ref{lem:exist}, as well as a technical measurability result, are collected in Appendix \ref{section:app}.

	\section{Definitions and notation}\label{section:not}
	\subsection{Notation}
		We consider the space $(\R^d,\mathcal B(\R^d),|\cdot|)$, equipped with the standard Euclidean norm, associated to the inner product $\langle\cdot,\cdot\rangle$. Here, $\mathcal B(E)$ denotes the Borel sigma-algebra of a space $E$. 
		
	Let $\mathcal H^1$ denotes the 1-dimensional Hausdorff measure in $\R^d$.

	In the sequel, for a compact set $A$, $\mbox{diam}(A)$ stands for the diameter of a set $A$ and $d(x,A)$ for the distance from the point $x$ to the set $A$, that is $$\mbox{diam}(A)=\max_{x,y\in A} |x-y|, \quad d(x,A)=\min_{y\in A} |x-y|.$$
	We denote by $d_H(A,B)$ the Hausdorff distance between two sets $A$ and $B$, given by $$d_H(A,B)=\sup_{a\in A} d(a,B) \lor \sup_{b\in B} d(b,A).$$
	
	Let $\lambda$ stand for the Lebesgue measure and $\delta_x$ for the Dirac measure at $x$.

		Throughout, an interval $(a,b)$ will denote an open interval of $[0,1]$ equipped with the induced topology.

Denote by $\mathcal D$ a metric associated to weak convergence. For a probability measure $\mu$ and a closed set of probability measures $\mathcal M$, let $\D(\mu, \mathcal M)=\min_{\mu'\in \mathcal M}\D(\mu, \mu')$.

			For two probability measures $\mu$ and $\mu'$, we define the bounded Lipschitz metric between $\mu$ and $\mu'$  by 
		$$|\mu-\mu'|_{BL}=\sup\left\{|\mu(h)-\mu'(h)|: |h|_\infty\leq 1, \sup_{x\neq y}\frac{|h(x)-h(y)|}{|x-y|}\leq 1\right\}.$$

		A continuous function from $[0,1]$ to $\R^d$  will be called a curve. If a curve $f$ is rectifiable, its length will be denoted by $\L(f)$.
	Finally, we will denote by $C([0,1])$  the metric space of continuous functions from $[0,1]$ to $\R^d$, equipped with the topology of uniform convergence. 
	
	\subsection{Description of the model}

	Let $g:[0,1]\to \R^d$ be a  curve with finite length and constant speed, such that the length equals the 1-dimensional Hausdorff distance. 

		Given  $c>0$,  we define $\M$ as the closed family of probability distributions $\mu$ on $[0,1]$ satisfying  $\mu\geq c\lambda $ on $[0,1]$.

For $n\geq 1$, we observe a triangular array of random vectors $X_i^n$, given by the model \begin{equation}
X_i^n=g(U_i^n)+\e_i^n, \quad i=1,\dots,n,\label{eq:modtri}
\end{equation} where the $U_i^n$, $i=1,\dots,n$, are independent and for every $i=1,\dots,n$, the distribution $\mu_i^n$ of $U_i^n$ belongs to $\M$.

Let $V:\R_+\to\R_+$ be a lower semi-continuous, strictly increasing function, continuous at 0, and such that $V(0)=0$. Moreover, we assume that $V$ satisfies the following property: there exist a constant $C> 0$, such that, for every $(x,y)\in\R_+$
$$V(x+y)\leq C(V(x)+V(y)) .$$
 For a curve $f$, we define $$\Delta_n(f)=\frac{1}{n}\sum_{i=1}^{n}V(d(X_i^n,\Im f)).$$

\begin{rem}
	If we set $V(x)=x^2$, we find the usual principal curve definition by \cite{KKLZ}.
	\end{rem}

We also define a function $T(f,\cdot):\R^d\to[0,1]$, by setting $$T(f,x)=\max\argmin_{t\in[0,1]}|x-f(t)|.$$ 

For every $L>0$,
let $$G_n(L)=\min_{\L(f)\leq L}\Delta_n(f),$$
 and
let $\hat{f}_{n,L}$ denote an empirically optimal curve with length at most $L$, that is a random variable taking its values in $C([0,1])$   such that $$\Delta_n(\hat{f}_{n,L})=G_n(L).$$  Moreover, we choose $\hat f_{n,L}$ $L$-Lipschitz. We set $\Lambda_n:=\inf\{L\geq 0, G_n(L)=0\}$.

\section{Main result}\label{section:res}

We consider the estimation of the curve $g$ in Model  \eqref{eq:modtri} using a sequence of generalized empirical principal curves, that is a sequence of curves which are optimal with respect to the criterion $\Delta_n$.

\begin{theo}\label{theo:sig-unknown}
	Let $g:[0,1]\to \R^d$ be a curve, such that  $\L(g)\leq \Lambda<\infty, $ and $|g'(t)|=\L(g)$ $dt-$a.e..
Assume that  $\L(g)=\mathcal H^1(\Im g)$.
We consider Model \eqref{eq:modtri}, with   $\frac{1}{n}\sum_{i=1}^{n}V(|\e_i^n|)$ tending to 0 in probability as $n$ tends to infinity. 
Let $\hat{L}_n$ be defined by
$$ \hat{L}_n\in\argmin_{L\in a_n \mathbb N
	\cap [0,\Lambda_n\land\Lambda]}\left[L^2\mathcal D\left(\frac{1}{n}\sum_{i=1}^{n}\delta_{T(\hat{f}_{n,L},X_i^n)},\M\right)+\Delta_n(\hat{f}_{n,L})\right],$$
where $a_n>0$ for every $n\geq 1$ and $a_n\to 0$ as $n\to \infty$.
Then,
$d_H(\Im \hat{f}_{n,\hat{L}_n},\Im g)$ converges in probability to 0 as $n$ tends to infinity.

\end{theo}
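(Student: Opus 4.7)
I would view the selection procedure as a penalized risk minimization, where $\Delta_n(\hat f_{n,L})$ enforces that the estimator fits the observations while the penalty $\hat L_n^2\D(\cdot,\M)$ prevents the image from overshooting $\Im g$, because any overshoot creates a gap in the empirical distribution of arc-length projections. The natural starting point is to identify a deterministic ``oracle'' length $L^*_n\in a_n\N$ with $\L(g)\le L^*_n\le \L(g)+a_n$, at which both pieces of the criterion are $o_{\P}(1)$. Since $g$ is feasible at length $L^*_n$, I have
$$\Delta_n(\hat f_{n,L^*_n})\le\Delta_n(g)=\frac{1}{n}\sum_{i=1}^n V(d(X_i^n,\Im g))\le\frac{1}{n}\sum_{i=1}^n V(|\e_i^n|)\xrightarrow{\P}0,$$
using that $V$ is non-decreasing and $d(X_i^n,\Im g)\le|\e_i^n|$. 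The companion bound $L^{*2}_n\,\D(\frac{1}{n}\sum_i\delta_{T(\hat f_{n,L^*_n},X_i^n)},\M)=o_{\P}(1)$ is harder: one has to argue that any near-optimal length-constrained curve essentially inherits, up to orientation, a constant-speed parameterization of $g$. The hypothesis $\L(g)=\mathcal H^1(\Im g)$ combined with the Cauchy--Crofton tools gathered in Section \ref{section:res} yields essential injectivity of this parameterization, whence $T(\hat f_{n,L^*_n},X_i^n)\approx U_i^n$ (or $1-U_i^n$) and the empirical $T$-measure is close to the convex combination $\frac{1}{n}\sum_i\mu_i^n\in\M$.

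\textbf{Consequences of optimality.} By the minimality of $\hat L_n$,
$$\Delta_n(\hat f_{n,\hat L_n})\xrightarrow{\P}0\quad\text{and}\quad\hat L_n^2\,\D\Bigl(\tfrac{1}{n}\sum_{i=1}^n\delta_{T(\hat f_{n,\hat L_n},X_i^n)},\M\Bigr)\xrightarrow{\P}0.$$
Since $\hat f_{n,\hat L_n}$ is $\hat L_n$-Lipschitz with $\hat L_n\le\Lambda$, its image lies in a bounded ball (containing all the $X_i^n$'s with high probability), so Blaschke's selection theorem delivers pre-compactness in Hausdorff distance. I would fix a subsequential limit $K$ of $\Im\hat f_{n,\hat L_n}$ and check $K=\Im g$.

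\textbf{The two Hausdorff directions.} For $\Im g\subseteq K$: given $g(u)\in\Im g$, the density lower bound $\mu_i^n\ge c\lambda$ yields a positive proportion of indices with $U_i^n$ near $u$; among those, a further full-fraction have $|\e_i^n|$ small (from $\frac{1}{n}\sum V(|\e_i^n|)\to 0$) and are within $V^{-1}(o(1))$ of $\Im\hat f_{n,\hat L_n}$ (from $\Delta_n\to 0$), by two applications of Markov's inequality. The triangle inequality then places a point of $\Im\hat f_{n,\hat L_n}$ arbitrarily close to $g(u)$. As a by-product $\mathcal H^1(K)\ge\L(g)>0$, which combined with $\Im\hat f_{n,\hat L_n}\subset B(0,\hat L_n+\mathrm{const})$ and $\Delta_n\to 0$ rules out $\hat L_n\to 0$; hence $\hat L_n$ is bounded away from $0$ along the subsequence. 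For $K\subseteq\Im g$, suppose for contradiction that some $x\in K$ satisfies $d(x,\Im g)\ge 3\delta>0$. The $\hat L_n$-Lipschitz property produces an interval $I_n\subset[0,1]$ of length $\gtrsim \delta/\hat L_n$ on which $\hat f_{n,\hat L_n}$ stays at distance $\ge 2\delta$ from $\Im g$; no $X_i^n$ with $|\e_i^n|<\delta$ can project into $I_n$, and the fraction of indices with $|\e_i^n|\ge\delta$ is $o_\P(1)$. Testing $\D$ against a triangular $1$-Lipschitz bump of height $\min(|I_n|/2,1)$ supported on $I_n$ gives
$$\D\Bigl(\tfrac{1}{n}\sum_i\delta_{T(\hat f_{n,\hat L_n},X_i^n)},\M\Bigr)\;\gtrsim\; c\,|I_n|^2\;\gtrsim\;c\,\delta^2/\hat L_n^2,$$
so $\hat L_n^2\D\gtrsim c\delta^2$, contradicting the previous display (this quantitative bound, written for the BL metric, transfers to the chosen metric $\D$ on the uniformly tight family at play).

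\textbf{Main obstacle.} The subtle step is controlling the penalty at the oracle length $L^*_n$: showing that near-minimizers of $\Delta_n$ at length $\le\L(g)+o(1)$ not only approach $\Im g$ in Hausdorff but essentially share its constant-speed parameterization, so that $T(\hat f_{n,L^*_n},X_i^n)$ recovers the hidden $U_i^n$ up to an isometry of $[0,1]$. This is exactly where the Cauchy--Crofton formula and the assumption $\L(g)=\mathcal H^1(\Im g)$ come in to rule out backtracking; once this is secured, the remainder (Blaschke, Markov, the gap estimate) is mechanical.
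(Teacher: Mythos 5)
Your high-level architecture matches the paper's: compare the selected length against an oracle grid point near $\L(g)$, deduce that both $\Delta_n(\hat f_{n,\hat L_n})$ and $\hat L_n^2\D(\cdot,\M)$ vanish, extract subsequential limits by compactness, and prove the two Hausdorff inclusions separately (the inclusion $\Im g\subset\lim\Im\hat f_{n,\hat L_n}$ from $\Delta_n\to 0$ plus the density lower bound, the reverse inclusion from the penalty term). However, the step you yourself flag as the ``main obstacle'' --- showing that the penalty $\D\bigl(\frac1n\sum_i\delta_{T(\hat f_{n,L^*_n},X_i^n)},\M\bigr)$ vanishes at the oracle length --- is the actual core of the proof (the paper's Proposition \ref{prop:Dn->0}, which occupies most of Section \ref{sub:proof}), and it is left unproven. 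Worse, the heuristic you propose for it, namely that $T(\hat f_{n,L^*_n},X_i^n)$ recovers $U_i^n$ up to an isometry of $[0,1]$, is not the right statement: the theorem does not assume $g$ injective (loops are explicitly allowed), so there is no hope of recovering $U_i^n$ pointwise, and no such claim is needed. The paper's route is measure-theoretic: one passes to a Skorokhod limit $(\varphi^*,\nu^*)$ of $(f^*_n,\frac1n\sum_i\delta_{(T(f^*_n,X_i^n),X_i^n)})$, shows $\int V(|z-\varphi^*(t)|)\,d\nu^*=0$ and $\nu^{*,2}\in\Mc{g}$, deduces $\Im\varphi^*=\Im g$, $\L(\varphi^*)=\L(g)=\H^1(\Im\varphi^*)$ and constant speed, and then invokes the Cauchy--Crofton equivalence (Lemma \ref{lem:UH}) to transfer $\nu^{*,1}\circ(\varphi^*)^{-1}=\nu^{*,2}\in\Mc{\varphi^*}$ into $\nu^{*,1}\in\M$. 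Without this (or an equivalent argument), your proof does not go through.

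Two further points. First, your oracle $L^*_n\in[\L(g),\L(g)+a_n]$ need not lie in the admissible grid $a_n\N\cap[0,\Lambda_n\land\Lambda]$ (e.g.\ if $\L(g)=\Lambda$); the paper instead takes $a_n\lfloor\L(g)/a_n\rfloor\le\L(g)$, which is why its limit curve is automatically $\L(g)$--Lipschitz --- a fact used to force $\L(\varphi^*)=\H^1(\Im\varphi^*)$ with constant speed. Second, your argument for the inclusion $K\subset\Im g$ via a Lipschitz bump supported on the interval $I_n$ is a genuinely different, more quantitative route than the paper's (which argues on the limit: $\nu^1\in\M$ forces $\nu(T\in(a,b))\ge c(b-a)$ while $\nu(Z=\varphi(T))=1$ and $\nu(Z\in\Im g)=1$, a contradiction). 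Your version is salvageable qualitatively, but the displayed lower bound $\D\gtrsim c|I_n|^2$ cannot be asserted for an arbitrary metrization $\D$ of weak convergence; you would need to argue by compactness that staying at positive BL-distance from the closed set $\M$ keeps $\D(\cdot,\M)$ bounded away from $0$, and you also need $\Delta_n(\hat f_{n,\hat L_n})\to0$ (not only the smallness of the noise fractions) to prevent projections from landing in $I_n$. As written, the proposal has a genuine gap at its central step.
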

Some comments are in order.

First, let us discuss the assumptions.
The requirement $\L(g)\leq \Lambda<\infty$ is technical. It allows, in the proof, to consider limit points of the constructed sequence of empirical principal curves. From a applied point of view, this is not a limitation of the procedure. Indeed, in practice, we will always consider a finite grid for the length. Moreover, with a fixed number of observations,  the minimal length needed to join all points is a finite upper bound for the length. 
The condition 
$\L(g)=\mathcal H^1(\Im g)$ ensures that the image of $g$ is parameterized with minimal possible length. Indeed, there exist an infinite number of parameterizations, with infinite possibilities for the length.  In words, generically, a portion of image of $g$ cannot be traveled several times.  The case were $g$ is injective is a particular case. Nevertheless, here, an image with loops is allowed.
We also require $|g'(t)|=\L(g)$ $dt-$a.e., which means that the image of $g$ is parameterized with constant speed $\L(g)$. These assumptions about the parametrization allow to show a key relation between the distribution class $\M$ and its image by $g$ (see Lemma \ref{lem:UH} below), the proof of which relies on the Cauchy-Crofton formula for the length of a rectifiable curve.

Observe that the main strength of the result is that it provides a convergent estimator in a very general framework. 
Neither the length, nor the noise level, converging to 0 in a very weak sense, is known. Intuitively, considering a practical situation with a fixed number of observations, the same data cloud could arise from several different generative curves, more or less long, in a model  with more or less noise. This illustrates the benefit of an estimator construction which does not require the knowledge of any of the two parameters. Apart from the upper bound $\Lambda$, which does not really need calibration in practice, as already mentioned, the procedure only  depends on a single parameter, namely the constant $c$ characterizing the class of possible sampling distributions $\M$.

It should be noticed  that the theorem does not guarantee that the procedure allows to recover the true underlying length. Nevertheless, the proof below shows that the selected length  cannot be too short: for all $\e>0$ one has $P(\hat L_n\le \L(g)-\e)\to 0$.

If $g$ is closed ($g(0)=g(1)$), then Theorem \ref{theo:sig-unknown} still holds when $\hat f_{n,L}$ is chosen as a closed empirical optimal curve with length less than $L$.

As  mentioned in the Introduction, the proof of Theorem \ref{theo:sig-unknown} is split into two parts. First, we state and prove the Cauchy-Crofton formula, together with a related result, and we use them to establish an equivalence linking $\M$ and its image by $g$ (Section \ref{sub:cauchy}). The rest of the proof of the theorem, divided in several lemmas, is presented thereafter (Section \ref{sub:proof}).

\section{Cauchy-Crofton formula and relation linking $\M$ to its image}\label{sub:cauchy}
	In the sequel, we will make use of the Cauchy-Crofton formula (\cite{Cauchy, Crofton}) for the length of a rectifiable curve $f$ in $\R^d$ (see, e.g., \cite{ayari}). We recall the formula in the next lemma, and give a proof for the sake of completeness.

Let  $\mathcal S^{d-1}=\{z\in \R^d, |z|=1\}$. For $\theta \in \mathcal S^{d-1}$ and $r\in[0,\infty)$, let  $$D_{\theta,r}=\{z\in \R^d\mid\langle\theta,z\rangle=r\}.$$
\begin{lem}[Cauchy-Crofton formula]	\label{lem:Cauchy}
	The length of a rectifiable curve $f:[0,1]\to \R^d$ is given by 
	$$\L(f)=\frac1{c_d}\int_{\mathcal S^{d-1}} \int_0^\infty \mbox{Card}(\{t\in[0,1],f(t)\in D_{\theta,r}\}) dr d\theta,$$
		where $c_d>0$ is a constant depending on the dimension $d$.
	
\end{lem}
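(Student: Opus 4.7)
My plan is to deduce the formula from the one-dimensional coarea (Banach indicatrix) theorem after reducing to an arclength parametrization. Concretely, since $f$ is rectifiable one may reparameterize it at constant speed so that, without loss of generality, $f:[0,\L(f)]\to\R^d$ is $1$-Lipschitz with $|f'(s)|=1$ for a.e.\ $s$. This reparameterization neither changes $\L(f)$ nor alters the cardinalities $\mbox{Card}(\{t:f(t)\in D_{\theta,r}\})$ for almost every $(\theta,r)$: any interval on which $f$ is constant collapses to a single point $p\in\R^d$, and the set $\{(\theta,r)\in\mathcal S^{d-1}\times[0,\infty):p\in D_{\theta,r}\}$ is the codimension-$1$ submanifold $\{(\theta,\langle\theta,p\rangle):\langle\theta,p\rangle\ge 0\}$, which has measure zero.

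For each fixed $\theta\in\mathcal S^{d-1}$, the scalar function $u_\theta(s):=\langle\theta,f(s)\rangle$ is $1$-Lipschitz on $[0,\L(f)]$, hence absolutely continuous, and $u_\theta^{-1}(r)=\{s:f(s)\in D_{\theta,r}\}$. The Banach indicatrix identity then yields
\[
\int_{\R}\mbox{Card}(u_\theta^{-1}(r))\,dr=\int_0^{\L(f)}|u_\theta'(s)|\,ds=\int_0^{\L(f)}|\langle\theta,f'(s)\rangle|\,ds.
\]
The substitutions $\theta\mapsto-\theta$, $r\mapsto-r$ show that the integral over $\mathcal S^{d-1}\times[0,\infty)$ equals one half of the corresponding integral over $\mathcal S^{d-1}\times\R$, so by Fubini,
\[
\int_{\mathcal S^{d-1}}\int_0^\infty\mbox{Card}(u_\theta^{-1}(r))\,dr\,d\theta=\tfrac12\int_0^{\L(f)}\Big(\int_{\mathcal S^{d-1}}|\langle\theta,f'(s)\rangle|\,d\theta\Big)\,ds.
\]
The inner integral equals a constant $\alpha_d:=\int_{\mathcal S^{d-1}}|\langle\theta,u\rangle|\,d\theta$ independent of the unit vector $u$, by rotational invariance of the surface measure, and since $|f'(s)|=1$ a.e.\ the outer integration gives $(\alpha_d/2)\L(f)$. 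Setting $c_d:=\alpha_d/2$ proves the formula.

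The main obstacle I anticipate is the joint measurability of $(\theta,r)\mapsto\mbox{Card}(\{s:f(s)\in D_{\theta,r}\})$ required by Fubini. This is not entirely trivial, but follows from the Borel measurability of the Banach indicatrix of a continuous function together with the continuity of $\theta\mapsto u_\theta$ in sup norm; alternatively, one can approximate $f$ by inscribed polygonal curves, establish the formula for them by direct computation (additivity over edges and an explicit computation for a single segment, which simultaneously identifies $c_d$), and then pass to the limit using lower semicontinuity and the fact that inscribed-polygon lengths converge upward to $\L(f)$.
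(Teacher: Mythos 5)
Your proof is correct, but it takes a genuinely different route from the paper's. The paper never reparameterizes the curve: it inscribes polygonal lines $f_p$, computes $\int_{\mathcal S^{d-1}}V(f_{p,\theta})\,d\theta=c_d\,\L(f_p)$ exactly for each polygon, passes to the limit $p\to\infty$ by dominated convergence (using $V(f_{p,\theta})\le\L(f_p)\le\L(f)$ and the convergence of variations), obtains $\L(f)=\frac1{c_d}\int_{\mathcal S^{d-1}}V(f_\theta)\,d\theta$, and only then invokes Banach's indicatrix theorem to rewrite $V(f_\theta)$ as an integral of the counting function. You instead pass to the constant-speed parametrization so that the one-dimensional coarea identity $\int_{\R}\mbox{Card}(u_\theta^{-1}(r))\,dr=\int|u_\theta'|$ applies directly to each projection, and conclude by Tonelli and rotational invariance. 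Your route requires the two reductions you supply -- the a.e.\ invariance of the counting function when the constancy intervals of the arclength map are collapsed, and $|f'|=1$ a.e.\ for the reparameterized curve -- and these are genuinely needed, since for a general rectifiable $f:[0,1]\to\R^d$ one only has $V(f_\theta)\ge\int|f_\theta'|$, possibly strictly; the paper's polygonal argument avoids differentiability altogether. In exchange, your version identifies the constant more carefully: you account for the factor $\tfrac12$ coming from restricting to $r\ge0$, whereas the paper states Banach's formula over $[0,\infty)$ rather than $\R$ (harmless for the lemma, since $c_d$ is left unspecified). The cost you correctly flag is the joint measurability of $(\theta,r)\mapsto\mbox{Card}(\{s: f(s)\in D_{\theta,r}\})$ needed for Tonelli; your first suggested fix (monotone limits of jointly upper semi-continuous counting functions over refining coverings) works and is essentially how the paper proves the related Lemma \ref{lem:mesurcard}, while your fallback via inscribed polygons and lower semicontinuity is, in effect, the paper's own proof.
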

\begin{rem}This result may also be written in the following equivalent form : 
	$$\L(f)=\frac1{c_d}\int_{\mathcal S^{d-1}} \int_0^\infty\sum_{y\in \Im f \cap D_{\theta,r}}\mbox{Card}(f^{-1}(\{y\}))drd\theta.$$\label{rem:Cauchy}
	\end{rem}

\begin{proof}
	
	For $p\geq 1$, consider the polygonal line  $f_p$ defined by the segments $[f(\frac{i}{p}),f(\frac{i+1}{p})]$, $0\leq i\leq p-1$.  We define $f_\theta$ and $f_{p,\theta}$ respectively by $f_{\theta}(t)=\langle \theta, f(t)\rangle$ and $f_{p,\theta}(t)=\langle \theta, f_p(t)\rangle$.
	There exist  $\theta_i\in \mathcal S^{(d-1)}$ and  $\rho_i\in[0,\infty)$, $0\leq i\leq p-1$, such that $f(\frac{i+1}{p})-f(\frac{i}{p})=\rho_i\theta_i$.
	Then, the variation  of $f_{p,\theta}$  is  $V({f_{p,\theta}})=\sum_{i=1}^{p-1}|\langle \theta, f(\frac{i+1}{p})-f(\frac{i}{p})\rangle|=
	\sum_{i=1}^{p-1}\rho_i|\langle \theta,\theta_i\rangle|.$ Hence, 
	$$\int_{\mathcal S^{d-1}} V({f_{p,\theta}})d\theta=
	\sum_{i=1}^{p-1}\rho_i\int_{\mathcal S^{d-1}}|\langle \theta,\theta_i\rangle|d\theta:=c_d\sum_{i=1}^{p-1}\rho_i =c_d\L(f_p).$$
	We have $\lim_{p\to+\infty}\L(f_p)=\L(f)$ and $\lim_{p\to+\infty} V({f_{p,\theta}})= V({f_{\theta}})$ \cite[Corollary of Theorem 2.1.2]{AlexRes}.
	By the Cauchy-Schwarz inequality, $V({f_{p,\theta}})\leq \L(f_p)$, and by definition of the length, $\L(f_p)\leq \L(f)$.
	Thanks to Lebesgue's dominated convergence theorem, $$\lim_{p\to+\infty}\int_{\mathcal S^{d-1}} V({f_{p,\theta}})d\theta=\int_{\mathcal S^{d-1}} V({f_{\theta}})d\theta.$$
	We deduce that $$\L(f)=\frac{1}{c_d}\int_{\mathcal S^{d-1}} V({f_{\theta}})d\theta.$$
	Besides, according to Banach's formula (see \cite{Banach}), we have
	$$ V({f_{\theta}})= \int_0^\infty \mbox{Card}(\{t\in[0,1],\langle \theta,f(t)\rangle=r\}) dr .$$
	Consequently, we get the Cauchy-Crofton formula: $$\L(f)=\frac1{c_d}\int_{\mathcal S^{d-1}} \int_0^\infty \mbox{Card}(\{t\in[0,1],\langle \theta,f(t)\rangle=r\}) dr d\theta.$$
	
\end{proof}

The next equality, corresponding  to the Cauchy-Crofton formula applied to open subset of $\Im g$, will be useful in the sequel.

\begin{rem}
	Let $(a,b)\subset[0,1]$. Then, $$\L(f|_{(a,b)})=\frac1{c_d}\int_{\mathcal S^{d-1}} \int_0^\infty \mbox{Card}(\{t\in(a,b),f(t)\in D_{\theta,r}\}) dr d\theta.$$ Since $$\L(f|_{(a,b)})=\int_0^1 \I_{(a,b)}(t) |f'(t)| dt,$$ we have $$\int_0^1 \I_{(a,b)}(t) |f'(t)| dt=\frac1{c_d}\int_{\mathcal S^{d-1}} \int_0^\infty \sum_{t\in[0,1]}\I_{(a,b)}(t)\I_{\{f(t)\in D_{\theta,r}\}} dr d\theta.$$
	Hence, by linearity, if $(a_i,b_i)$, $i\geq 1$, are pairwise disjoint open intervals of $[0,1]$, we have \begin{equation}
	\int_0^1 \I_{\bigcup_{i\geq 1} (a_i,b_i)}(t) |f'(t)| dt=\frac1{c_d}\int_{\mathcal S^{d-1}} \int_0^\infty \sum_{t\in[0,1]}\I_{\bigcup_{i\geq 1}(a_i,b_i)}(t)\I_{\{f(t)\in D_{\theta,r}\}} dr d\theta.\label{eq:IndCauchy}
	\end{equation}
	 
\end{rem}

In the sequel, we will also use a  Cauchy-Crofton-type formula for $g$ taking the form of an equality for measures.

This result relies on the following lemma.

\begin{lem}\label{lem:dominH}
	Let $f:[0,1]\to \R^d$ be a rectifiable curve. Then, the trace of $\mathcal H^1$ on $\Im f$ satisfies $\mathcal H^1\leq \gamma$, where $\gamma$ is the measure defined  on every Borel set $A\subset \Im f$ by  $$\gamma(A)= \frac 1{c_d}\int_{\mathcal S^{d-1}}\int_{0}^{\infty}\mbox{Card}( A\cap D_{\theta,r})drd\theta.$$
\end{lem}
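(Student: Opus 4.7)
The plan is to pass through the $1$-Lipschitz arclength reparametrization $\phi:[0,L]\to\R^d$ of $f$ (with $L=\L(f)$, $|\phi'|=1$ almost everywhere, and $\Im\phi=\Im f$), and to identify a Borel set $E\subset[0,L]$ on which $\phi$ is a bijection onto $A$. Once this is in place, $\lambda(E)$ is computed two ways: via the area formula for $1$-Lipschitz maps it equals $\mathcal H^1(A)$, and via an extension of the Cauchy-Crofton identity (equation (2.1)) to Borel subsets of $[0,L]$ it equals $\gamma(A)$. In fact the argument will produce $\mathcal H^1(A)=\gamma(A)$, which is strictly stronger than the claimed inequality.

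For the construction of $E$ I would use the first-hitting section $T:A\to[0,L]$ defined by $T(y):=\min\phi^{-1}(y)$. The minimum is attained since $\phi^{-1}(y)$ is compact, and $T$ is lower semi-continuous: for each $c$ the set $\phi([0,c])$ is closed by continuity of $\phi$ on a compact interval, hence $\{y\in A:T(y)>c\}=A\setminus\phi([0,c])$ is open in $A$. Define $E:=\{s\in[0,L]:\phi(s)\in A\text{ and }T(\phi(s))=s\}$. Since $T\circ\phi$ is Borel measurable, $E$ is a Borel subset of $[0,L]$, and by construction $\phi|_E$ is a bijection onto $A$. The area formula for the $1$-Lipschitz injective map $\phi|_E$ then yields $\mathcal H^1(A)=\int_E|\phi'(s)|\,ds=\lambda(E)$.

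For the other half I would extend equation (2.1) to Borel subsets of $[0,L]$. For each $\theta\in\mathcal S^{d-1}$ the scalar projection $\phi_\theta:=\langle\theta,\phi\rangle$ is $1$-Lipschitz, and the one-dimensional coarea (Banach indicatrix) formula applied to $\phi_\theta|_E$ gives $\int_E|\phi_\theta'(s)|\,ds=\int \mbox{Card}(\{s\in E:\phi_\theta(s)=r\})\,dr$. Integrating over $\theta$ by Fubini and using the identity $\int_{\mathcal S^{d-1}}|\langle\theta,v\rangle|\,d\theta=c_d|v|$ (together with $|\phi'|=1$ a.e.) produces $c_d\lambda(E)=\int_{\mathcal S^{d-1}}\int \mbox{Card}(\{s\in E:\phi(s)\in D_{\theta,r}\})\,dr\,d\theta$, and the injectivity of $\phi|_E$ converts the preimage count into the image count $\mbox{Card}(A\cap D_{\theta,r})$. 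Reconciling the integration range $r\in\R$ with the $[0,\infty)$ convention of the statement (via the symmetry $D_{\theta,r}=D_{-\theta,-r}$) gives $\lambda(E)=\gamma(A)$, and combining with the area-formula identity delivers $\mathcal H^1(A)=\gamma(A)$.

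The main obstacle is the step that the paper only proves equation (2.1) for countable disjoint unions of open intervals (via the polygonal approximation used for Lemma \ref{lem:Cauchy}); extending it to an arbitrary Borel set $E$ requires the one-dimensional coarea formula, which is not made explicit in the paper. A secondary but nontrivial point is the careful bookkeeping of the $r\in[0,\infty)$ versus $r\in\R$ conventions and of the constant $c_d$, so that the two computations of $\lambda(E)$ really produce the same normalization on both sides.
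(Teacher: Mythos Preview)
Your argument is correct and in fact establishes the stronger equality $\mathcal H^1(A)=\gamma(A)$ for every Borel $A\subset\Im f$ (this is the classical coincidence of Hausdorff and integral-geometric measure on $1$-rectifiable sets). The route, however, is genuinely different from the paper's.

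The paper proceeds by a covering argument: for an open set $\mathcal O\cap\Im f$ it invokes Vitali's covering theorem to select pairwise disjoint arcs $f([\alpha_i,\beta_i])$ whose diameters $|f(x_i)-f(y_i)|$ sum to at least $\mathcal H^1(\mathcal O\cap\Im f)-\varepsilon$, applies Cauchy--Crofton to each \emph{straight segment} $[f(x_i),f(y_i)]$, and then uses the intermediate-value observation that whenever a hyperplane separates $f(x_i)$ from $f(y_i)$ it must meet the arc $f([\alpha_i,\beta_i])$. Summing and letting $\varepsilon\to 0$ gives $\mathcal H^1\le\gamma$ on open sets, and outer regularity upgrades this to Borel sets. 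This stays within the paper's toolkit (only the elementary Cauchy--Crofton for segments and Vitali's theorem), at the price of obtaining merely the inequality; equality is recovered later in Remark~\ref{rem:H1=G} under the extra hypothesis $\mathcal H^1(\Im g)=\L(g)$.

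Your approach trades this elementary covering step for the Lipschitz area formula (applied twice: once to $\phi|_E$ with target $\R^d$, once to $\phi_\theta|_E$ with target $\R$), together with a measurable first-hitting section. This is cleaner and yields the sharp equality directly, with no outer-regularity passage; the cost is importing the area formula on arbitrary Borel sets, which the paper does not develop (its equation~\eqref{eq:IndCauchy} is proved only for countable unions of open intervals via polygonal approximation). The bookkeeping issue you flag about $r\in\R$ versus $r\in[0,\infty)$ is real and present already in the paper's own statements; it is absorbed in the constant $c_d$ and does not affect correctness.
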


As a preliminary result, the next lemma states the measurability of $(\theta,r)\mapsto\mbox{Card}( A\cap D_{\theta,r})$. The proof is postponed to the Appendix (Section \ref{section:app-mes}).

\begin{lem}\label{lem:mesurcard}
Let $f:[0,1]\to \R^d$ be a rectifiable curve. For $\theta\in \mathcal S^{d-1}$, $r\in \R_+$, $A$ a Borel subset of $\Im f$, let $N_{\theta,r}(A)=\mbox{Card}(A\cap D_{\theta,r})$. Then, the function $(\theta,r)\to N_{\theta,r}(A)$ is measurable for the Lebesgue sigma-algebra.
\end{lem}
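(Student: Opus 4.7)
The plan is to express, for each $k\geq 1$, the level set $\{(\theta,r): N_{\theta,r}(A)\geq k\}$ as the projection of an explicit Borel set, and then to invoke the classical fact that analytic (Suslin) sets are Lebesgue measurable. Since $\Im f$ is compact, any Borel subset $A$ of $\Im f$ is also Borel in $\R^d$. For each integer $k\geq 1$, I would introduce
\[
B_k = \bigl\{(\theta,r,y_1,\dots,y_k)\in \mathcal S^{d-1}\times \R_+\times (\R^d)^k : y_i\in A,\ y_i\neq y_j \text{ for } i\neq j,\ \langle \theta, y_i\rangle = r \text{ for every } i\bigr\}.
\]
As an intersection of the Borel set $A^k$ in the $y$-coordinates, the Borel complements of the diagonals $\{y_i=y_j\}$, and the closed affine constraints $\{\langle \theta,y_i\rangle = r\}$, the set $B_k$ is Borel in the Polish product space. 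A pair $(\theta,r)$ belongs to the projection $\pi(B_k)$ onto the first two coordinates precisely when $A\cap D_{\theta,r}$ contains at least $k$ distinct points, that is, $N_{\theta,r}(A)\geq k$.

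To finish, I would invoke the Lusin--Suslin theorem: the projection of a Borel subset of a Polish space is analytic, and every analytic set is universally measurable, in particular Lebesgue measurable. Consequently $\{(\theta,r): N_{\theta,r}(A)\geq k\}$ is Lebesgue measurable for every $k\geq 1$, and since $N_{\theta,r}(A)$ takes values in $\N\cup\{\infty\}$, the function $(\theta,r)\mapsto N_{\theta,r}(A)$ is Lebesgue measurable, as claimed.

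The main obstacle is conceptual rather than computational: projections of Borel sets are in general only analytic, not Borel, which is precisely why the statement is formulated for the Lebesgue $\sigma$-algebra rather than the Borel one. A more elementary route would begin with the easier case of compact $A$, for which $\{(\theta,r): A\cap D_{\theta,r}\neq \emptyset\}$ is closed by a standard compactness argument on converging sequences in $A$, and then try to extend to arbitrary Borel $A$ via a monotone class or inner regularity argument. Such an extension is hampered by the fact that $\mbox{Card}$ is not well behaved under set-theoretic complements once the value $\infty$ is admitted, so I expect the descriptive set-theoretic route above to be the cleanest.
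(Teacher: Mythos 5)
Your proof is correct, but it follows a genuinely different route from the paper's. You obtain measurability by writing each level set $\{(\theta,r): N_{\theta,r}(A)\geq k\}$ as the projection of an explicit Borel subset of a Polish product space, and then invoking the Lusin--Suslin machinery: projections of Borel sets are analytic, and analytic sets are universally measurable, hence measurable for the completion of $d\theta\, dr$. This is a clean one-step argument that handles an arbitrary Borel $A$ directly, needs neither the rectifiability of $f$ nor any null-set excision, and generalizes well beyond the present setting; its cost is the appeal to a nontrivial theorem of descriptive set theory. The paper instead stays within elementary measure theory: for compact $A$ it shows that $N_{\theta,r}(A)$ is the increasing limit in $k$ of the upper semi-continuous functions $N^k_{\theta,r}(A)$ obtained by counting, over finite open coverings of $A$ of mesh at most $1/k$, the covering sets met by $D_{\theta,r}$ (so the compact case even yields Borel measurability); it then extends to Borel $A$ by a monotone class argument. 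The obstruction you correctly anticipated --- that $\mbox{Card}$ misbehaves under complementation when the value $\infty$ occurs --- is exactly what the paper circumvents by using the Cauchy--Crofton formula and $\L(f)<\infty$ to show that $B=\{(\theta,r): N_{\theta,r}(\Im f)=\infty\}$ is $d\theta\, dr$-negligible, and running the monotone class argument for $(\theta,r)\mapsto N_{\theta,r}(A)\I_{B^c}(\theta,r)$, where counts are finite and complements subtract cleanly. In short, your argument is shorter and more general but imports heavier tools; the paper's is longer and exploits the specific finite-length structure, which is in any case already available from Lemma \ref{lem:Cauchy}.
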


\begin{proof}[Proof of Lemma \ref{lem:dominH}]
	An open subset of $\Im f$ may be written $$\mathcal O\cap\Im f=
	f\Big(\bigcup_{k\geq 1}(a_k,b_k)\Big)=\bigcup_{k\geq 1}f((a_k,b_k)),
	$$ where $\mathcal O$ is an open subset of $ \R^d$, and   $(a_k,b_k)$, $k\geq 1$, are pairwise disjoint open intervals of $[0,1]$. 
	Let $\mathcal V=\{f([\alpha,\beta]); \exists k\geq 1, a_k<\alpha<\beta<b_k, \}$. This set
	is a Vitali class for $\mathcal O\cap\Im f$, that is, for every $y=f(x)$, where $x\in \bigcup_{k\geq 1}(a_k,b_k), $ and every $\delta>0$, there exist $\alpha< \beta$, such that $y\in f([\alpha,\beta])$ and $0<\mbox{diam}(f([\alpha,\beta])) <\delta$. According to Vitali's covering theorem, for every  $\e>0$, there exist intervals $[\alpha_i,\beta_i]\subset \bigcup_{k\geq 1}(a_k,b_k)$, $i\geq 1$, such that the sets $f([\alpha_i,\beta_i])$, $i\geq 1$, are pairwise disjoint, $\mathcal H^1(\bigcup_{n\geq 1}f([\alpha_i,\beta_i]))=\mathcal{H}^1(\mathcal O\cap \Im f)$ and $\mathcal{H}^1(\mathcal O\cap\Im f)\leq \sum_{i\geq 1}\mbox{diam}(f([\alpha_i,\beta_i])) +\e$ (see \cite[Theorem 1.10]{Falc}). Hence, for every $i\geq 1$, there exist $x_i<y_i$, in  $[\alpha_i,\beta_i]$, such that 
	\begin{align*}
	\mathcal{H}^1(\mathcal O\cap\Im f)&\leq \sum_{i\geq 1}|f(x_i)-f(y_i)|+\e\\
	&=\sum_{i\geq 1}\frac 1{c_d}\int_{\mathcal S^{d-1}}\int_{0}^{\infty}\mbox{Card}(t\in[0,1],h_i(t)\in  D_{\theta,r}\})drd\theta+\e,
	\end{align*}
	thanks to the Cauchy-Crofton formula applied to the  functions $h_i:t\in[0,1]\mapsto tf(x_i)+(1-t)f(y_i)$,  for all $i\geq 1$.
	Observe that $\mbox{Card}(t\in[0,1],h_i(t)\in  D_{\theta,r}\})\in\{0,1\}$ $d\theta dr$ a.e.. By Lemma \ref{lem:mesurcard}, the function $(\theta,r)\to \mbox{Card}(A\cap D_{\theta,r})$ is measurable for the Lebesgue sigma-algebra, for every Borel subset $A$  of $\Im f$.  If $\mbox{Card}(t\in[0,1],h_i(t)\in  D_{\theta,r}\})= 1$, then $\mbox{Card}(\{f([\alpha_i,\beta_i])\cap D_{\theta,r}\})\geq 1$. Thus, 
	\begin{align*}
	\mathcal{H}^1(\mathcal O\cap\Im f)&\leq \sum_{i\geq 1}\frac 1{c_d}\int_{\mathcal S^{d-1}}\int_{0}^{\infty}\mbox{Card}(\{ f([\alpha_i,\beta_i])\cap D_{\theta,r}\})drd\theta+\e\\
	&= \frac 1{c_d}\int_{\mathcal S^{d-1}}\int_{0}^{\infty}\mbox{Card}(\{\mathcal O\cap\Im f\cap D_{\theta,r}\})drd\theta+\e.
	\end{align*}
	As $\e$ is arbitrary, $\mathcal{H}^1(\mathcal O\cap\Im f)\leq \frac 1{c_d}\int_{\mathcal S^{d-1}}\int_{0}^{\infty}\mbox{Card}(\{ \mathcal O\cap\Im f\cap D_{\theta,r}\})drd\theta$. We define $\gamma$, for every Borel set $A\subset \Im f$, by  $\gamma(A)= \frac 1{c_d}\int_{\mathcal S^{d-1}}\int_{0}^{\infty}\mbox{Card}( A\cap D_{\theta,r})drd\theta$: $\gamma$ is a measure, satisfying  $\mathcal{H}^1(\mathcal O\cap\Im f)\leq\gamma(\mathcal O\cap\Im f)$. According to the Cauchy-Crofton formula, $\gamma(\Im f)\leq \L(f)<\infty$, so that  the measure $\gamma$ is finite.  By outer regularity of finite measures,  the trace of $\mathcal H^1$ on $\Im f$ is less than the measure $\gamma$.
\end{proof}

\begin{rem}\label{rem:H1=G}For $g$ such that $\mathcal{H}^1(\Im g)=\L(g)$, $\mathcal H^1=\gamma$. Indeed, since  $\mathcal{H}^1\leq\gamma$ by Lemma \ref{lem:dominH}, it is sufficient to show that both measures have the same mass. Yet, on the one hand, $\mathcal H^1(\Im g)\leq\gamma(\Im g)$ by Lemma \ref{lem:dominH}, and on the other hand, $\gamma(\Im g)\leq \L(g)$ by the Cauchy-Crofton formula (Lemma \ref{lem:Cauchy}), so that the assumption $\mathcal{H}^1(\Im g)=\L(g)$ implies  $\mathcal{H}^1(\Im g)=\gamma(\Im g)$. 
	
\end{rem}

\begin{rem}\label{rem:card1} For $g$ such that $\mathcal{H}^1(\Im g)=\L(g)$, $\mbox{Card} (g^{-1}(\{y\}))=1$ for almost every $y$ with respect to the trace of $\mathcal H^1$ on $\Im g$. 	
This fact follows from  $$\mathcal{H}^1(\Im g)=\gamma(\Im g)=\frac 1{c_d}\int_{\mathcal S^{d-1}}\int_{0}^{\infty}\sum_ {y\in \Im g\cap D_{\theta,r}}1drd\theta,$$ together with the Cauchy-Crofton formula for $g$ (see Remark \ref{rem:Cauchy}): $$
\L(f)=\frac 1{c_d}\int_{\mathcal S^{d-1}}\int_{0}^{\infty}\sum_{y\in\Im g \cap D_{\theta,r}}\mbox{Card}(g^{-1}(\{y\}))drd\theta.
$$
\end{rem}

We are now in a position to state the next lemma, which  characterizes  the image by $g$ of a  distribution  belonging to the class $\M$.

\begin{lem}
		Let $g:[0,1]\to \R^d$ be a curve such that $0<\L(g)<\infty$, $|g'(t)|=\L(g)$ a.e., and $\mathcal H^1(\Im g)=\L(g)$. Let $\mu$ be a probability distribution supported in $[0,1]$, and let $c>0$ denote a constant.
				Then, \begin{equation}
				\mu\geq c\lambda \Leftrightarrow \forall A\subset \mathcal B(\R^d)\cap\Im g,\mu\circ g^{-1}(A)\geq c\frac{\mathcal H^1(A)}{\L(g)}.\label{eq:mu-mug-1}
				\end{equation}

\label{lem:UH}
\end{lem}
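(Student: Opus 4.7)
The plan is to separate the two implications and base both on an area-type identity,
$$\lambda(g^{-1}(A))=\mathcal H^1(A)/\L(g)\quad\text{for every Borel }A\subset\Im g,$$
which I would establish first. Applying the Cauchy--Crofton equality \eqref{eq:IndCauchy} to $E=g^{-1}(A)$ (open in $[0,1]$ whenever $A$ is open), using $|g'|=\L(g)$ almost everywhere, and invoking Remarks~\ref{rem:H1=G} and~\ref{rem:card1}, I would observe that the measure $A\mapsto \L(g)\lambda(g^{-1}(A))$ dominates $\mathcal H^1|_{\Im g}$ while sharing the same total mass $\L(g)$, so the two coincide on open sets of $\Im g$ and hence on all Borel sets by a monotone-class argument. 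The forward direction is then immediate by push-forward: if $\mu\ge c\lambda$ then $\mu\circ g^{-1}(A)=\mu(g^{-1}(A))\ge c\lambda(g^{-1}(A))=c\mathcal H^1(A)/\L(g)$.

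For the converse, the key idea is to reduce an arbitrary Borel set $B\subset[0,1]$ to the region where $g$ is injective. Set $N=\{y\in\Im g:\mbox{Card}(g^{-1}(\{y\}))>1\}$ and $M=g^{-1}(N)$. Applying the same area-type computation with $E=M$, and using that $\mathcal H^1(N)=0$ by Remark~\ref{rem:card1}, yields $\L(g)\lambda(M)=\int_N\mbox{Card}(g^{-1}(\{y\}))\,d\mathcal H^1(y)=0$, whence $\lambda(M)=0$. Although $N$ is in general only analytic -- the projection through $g$ of the Borel set $\{(t_1,t_2)\in[0,1]^2:t_1\neq t_2,\ g(t_1)=g(t_2)\}$ -- it is universally measurable, so by outer regularity of $\lambda$ there is a Borel $\tilde M\supset M$ with $\lambda(\tilde M)=0$.

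Given any Borel $B\subset[0,1]$, I would set $B^\circ:=B\setminus\tilde M$, a Borel subset of $[0,1]\setminus M$ with $\lambda(B^\circ)=\lambda(B)$. Since $g$ is injective on $[0,1]\setminus M$, the Lusin--Souslin theorem ensures that $A:=g(B^\circ)$ is a Borel subset of $\Im g\setminus N$. I would then check the exact identity $g^{-1}(A)=B^\circ$: any $t\in g^{-1}(A)$ satisfies $g(t)\in\Im g\setminus N$, so $g^{-1}(\{g(t)\})=\{t\}$, and $g(t)\in g(B^\circ)$ forces $t\in B^\circ$. Plugging $A$ into the hypothesis gives
$$\mu(B^\circ)=\mu(g^{-1}(A))\ge c\mathcal H^1(A)/\L(g)=c\lambda(g^{-1}(A))=c\lambda(B^\circ)=c\lambda(B),$$
and since $B^\circ\subset B$, one concludes $\mu(B)\ge\mu(B^\circ)\ge c\lambda(B)$.

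The principal hurdle is handling the multiplicity set: because $N$ need not be Borel, one must invoke universal measurability of analytic sets together with the Lusin--Souslin theorem to guarantee that $g$-images of Borel subsets of the injective region remain Borel, so that the hypothesis is directly applicable there. Modulo this measurability step, the assumption $\L(g)=\mathcal H^1(\Im g)$ combined with Remark~\ref{rem:card1} encodes precisely that $g$ is \emph{injective up to a $\lambda$-null set}, which collapses the problem to the trivial injective case.
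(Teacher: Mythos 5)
Your proof is correct, and while your forward implication is essentially the paper's computation repackaged (you first isolate the push-forward identity $\lambda\circ g^{-1}=\mathcal H^1|_{\Im g}/\L(g)$, which is exactly Remark~\ref{rem:h=lambda} in the paper, after which the implication is a one-line consequence of $\mu\ge c\lambda$), your converse takes a genuinely different route. The paper works with an open set $E=\bigcup_i(a_i,b_i)$ of $[0,1]$ and its image $g(E)$: the hypothesis, the Cauchy--Crofton identity \eqref{eq:IndCauchy} and the multiplicity-one property (Remark~\ref{rem:card1}) give $\mu(g^{-1}(g(E)))\ge c\lambda(E)$, and the argument is completed by identifying $\mu(E)$ with $\mu(g^{-1}(g(E)))$, the difference set being contained in the multiplicity set $M=\{t:\mathrm{Card}(g^{-1}(\{g(t)\}))>1\}$, which is shown to be $\lambda$-null; the general Borel case then follows by outer regularity. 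You instead delete a Borel $\lambda$-null envelope of $M$ from an arbitrary Borel set $B$ \emph{before} pushing forward, so that $g$ is injective on $B^\circ$, Lusin--Souslin makes $A=g(B^\circ)$ Borel, the preimage identity $g^{-1}(A)=B^\circ$ is exact, and you only need the monotonicity $\mu(B)\ge\mu(B^\circ)$. What each approach buys: the paper's avoids descriptive set theory entirely (images of open subsets of $[0,1]$ are $\sigma$-compact, hence Borel, with no appeal to Lusin--Souslin or analytic sets) and handles measurability for free; yours treats all Borel sets at once and, more importantly, sidesteps the delicate step $\mu(E)=\mu(g^{-1}(g(E)))$ --- in the paper that equality is justified only by the $\lambda$-negligibility of $M$, which does not by itself control $\mu(M)$ since $\mu\ge c\lambda$ is precisely what is being proved at that stage, whereas your one-sided inequality $\mu(B)\ge\mu(B^\circ)$ is immune to $\mu$ possibly charging $M$. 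So your variant is not only valid but arguably more robust at the one fragile point of the argument; the price is the heavier measurability machinery, which you correctly identify and discharge.
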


Let us denote by $\Mc{g}$  the family of probability distributions $m$ on $\R^d$, with support $\Im g$, such that $\forall A\subset \mathcal B(\R^d)\cap\Im g,m(A)\geq c\frac{\mathcal H^1(A)}{\L(g)}$. Hence, the equivalence \eqref{eq:mu-mug-1} means 
$$\mu\in\M\Leftrightarrow\mu\circ g^{-1}\in\Mc{g}.$$

In the proof of Lemma \ref{lem:UH}, we will use the fact that the property $\mathcal H^1(\Im g)=\L(g)$ may be localized, as shown in the next lemma.
\begin{lem}\label{lem:loc}Let $g:[0,1]\to \R^d$ be a curve such that $0<\L(g)<\infty$, and $\mathcal H^1(\Im g)=\L(g)$.
	Considering a subdivision $a_0=0<a_1<\dots <a_n=1$, we have, for every $1\leq i\leq n$, $$\L(g|_{(a_{i-1},a_i)})=\mathcal  H^1(g((a_{i-1},a_i))).$$
\end{lem}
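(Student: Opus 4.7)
The plan is to realize the desired local equalities as forced equalities in a chain that sandwiches $\L(g)$ between two sums. The three ingredients needed are: (i) the additivity of length along the subdivision, (ii) the subadditivity of $\mathcal H^1$ together with the fact that the finite set of partition images has zero $\mathcal H^1$-measure, and (iii) the piecewise inequality $\mathcal H^1(g((a_{i-1},a_i)))\le \L(g|_{(a_{i-1},a_i)})$.

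For (iii), I would use the combination of Lemma \ref{lem:dominH} applied to the restriction $g|_{[a_{i-1},a_i]}$ and the Cauchy-Crofton formula (Lemma \ref{lem:Cauchy}) for the same restriction. Specifically, letting $\gamma_i(A)=\frac1{c_d}\int_{\mathcal S^{d-1}}\int_0^\infty\mbox{Card}(A\cap D_{\theta,r})\,dr\,d\theta$ for Borel $A\subset g([a_{i-1},a_i])$, Lemma \ref{lem:dominH} gives $\mathcal H^1(g((a_{i-1},a_i)))\le \gamma_i(g((a_{i-1},a_i)))$, and since $\mbox{Card}(g((a_{i-1},a_i))\cap D_{\theta,r})\le \mbox{Card}(\{t\in(a_{i-1},a_i),\ g(t)\in D_{\theta,r}\})$ pointwise in $(\theta,r)$, Cauchy-Crofton applied to $g|_{[a_{i-1},a_i]}$ yields $\gamma_i(g((a_{i-1},a_i)))\le \L(g|_{(a_{i-1},a_i)})$.

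Combining the ingredients, I would write
\begin{equation*}
\L(g)=\mathcal H^1(\Im g)\le \sum_{i=1}^n \mathcal H^1(g((a_{i-1},a_i)))\le \sum_{i=1}^n \L(g|_{(a_{i-1},a_i)})=\L(g),
\end{equation*}
where the first inequality comes from the subadditivity of $\mathcal H^1$ combined with $\mathcal H^1(\{g(a_0),\dots,g(a_n)\})=0$ (a finite set has zero 1-dimensional Hausdorff measure), and the last equality is the additivity of length. Since the extreme terms coincide, all inequalities are equalities, and term-by-term equality in the middle inequality gives exactly $\mathcal H^1(g((a_{i-1},a_i)))=\L(g|_{(a_{i-1},a_i)})$ for every $i$.

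The potential subtlety lies in justifying term-by-term equality from equality of the two sums: this follows because the two sums have finitely many summands and the inequality $\mathcal H^1(g((a_{i-1},a_i)))\le \L(g|_{(a_{i-1},a_i)})$ holds for every $i$, so no cancellation between pieces is possible. Otherwise the argument is a clean sandwich, and the main conceptual step is simply recognizing that Lemma \ref{lem:dominH} and the Cauchy-Crofton formula already encode the one-sided bound $\mathcal H^1(g((a_{i-1},a_i)))\le \L(g|_{(a_{i-1},a_i)})$ on each open subinterval.
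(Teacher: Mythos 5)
Your proof is correct and follows essentially the same route as the paper: the paper's argument is the contrapositive of your sandwich, using the identical chain $\mathcal H^1(g([0,1]))\le\sum_i\mathcal H^1(g((a_{i-1},a_i)))\le\sum_i\L(g|_{(a_{i-1},a_i)})=\L(g)$ and forcing equality term by term. The only difference is that you explicitly justify the per-piece bound $\mathcal H^1(g((a_{i-1},a_i)))\le\L(g|_{(a_{i-1},a_i)})$ via Lemma \ref{lem:dominH} and the Cauchy--Crofton formula, whereas the paper leaves this standard inequality implicit.
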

\begin{proof}
	If not, there exists $i_0\in\{1,\dots,n\}$ such that $\L(g|_{(a_{i_0-1},a_{i_0})})> \mathcal H^1((a_{i_0-1},a_{i_0}))$, which implies \begin{align*}
	\mathcal	H^1(g([0,1]))&\leq \sum_{i=1}^{n} \mathcal H^1(g((a_{i-1},a_i)))\\&< \sum_{i=1}^{n}
	\L(g|_{(a_{i-1},a_i)})=\L(g).
	\end{align*}
\end{proof}

\begin{proof}[Proof of Lemma \ref{lem:loc}]
	
\begin{enumerate}
			\item[$\Rightarrow$]Assume that  $\mu\geq c\lambda$.
				 An open subset of $\Im g$ may be written $$\mathcal O\cap\Im g=
				g\left(\bigcup_{i\geq 1}(a_i,b_i)\right)=	\bigcup_{i\geq 1}g((a_i,b_i)),
				$$ where $\mathcal O$ is an open subset of $\R^d$, and  $(a_i,b_i)$, $i\geq 1$, are pairwise disjoint open intervals of $[0,1]$. 
			
			 Thanks to the  assumption on $\mu$, we have
			\begin{align}
			\mu\circ g^{-1}( \mathcal O\cap\Im g)&
			\geq c\lambda(g^{-1}(\mathcal O\cap\Im g))\nonumber\\&\geq c\lambda\left(\bigcup_{i\geq 1}(a_i,b_i)\right)
			\nonumber\\&=c\sum_{i\geq 1}(b_i-a_i)\label{eq:disj}\\&=c
			\sum_{i\geq 1}\frac{\mathcal \L(g|_{(a_i,b_i)})}{\L(g)}
			\label{eq:vitcons}\\&=c
			\sum_{i\geq 1}\frac{\mathcal H^1(g((a_i,b_i)))}{ \L( g)}
			\label{eq:L=H}\\&\geq c\frac{\mathcal H^1(\mathcal O\cap\Im g)}{ \L( g)}.\nonumber
			\end{align}
		
			For the equality \eqref{eq:disj}, we used that the intervals $(a_i,b_i)$ are disjoint, for \eqref{eq:vitcons}, the property $|g'(t)|=\L(g)$ a.e., and then for \eqref{eq:L=H}, the localized version of the equality $\mathcal H^1(\Im g)=\L(g)$ (Lemma \ref{lem:loc}).
		
		 The result extends to every Borel subset of $\Im g$, using the outer regularity of probability measures.

	\begin{rem}\label{rem:h=lambda}
	 	 Taking $c=1$ and $\mu=\lambda$, we obtain that $\lambda\circ g^{-1}$ is the trace of $\frac{\H^1}{\L(g)}$ on $\Im g$, since both measures are probability measures.
	 \end{rem}

		\item[$\Leftarrow$] Assume that  $\forall A\subset \mathcal B(\R^d)\cap\Im g,\mu\circ g^{-1}(A)\geq c\frac{\mathcal H^1(A)}{\L(g)}$. An open subset of $[0,1]$, for the induced topology, has the form $\bigcup_{i\geq 1}(a_i,b_i)$, where $(a_i,b_i)$, $i\geq 1$ are pairwise disjoint open intervals of $[0,1]$. Let $\mathcal O\cap \Im g=g\left(\bigcup_{i\geq 1}(a_i,b_i)\right)$. Using the  assumption, the fact that $\mathcal H^1=\gamma$ (Remark \ref{rem:H1=G}), and the property $\mbox{Card}(g^{-1}(\{y\}))=1$ for a.e. $y$ with respect to the trace of $\mathcal H^1$ on $\Im g$ (Remark \ref{rem:card1}), we may write
		\begin{align*}
	\mu\circ g^{-1}(\mathcal O\cap\Im g)&\geq c\frac{\mathcal H^1(\mathcal O\cap\Im g)}{\L(g)}
		\\&= \frac{c}{\L(g)}\frac 1{c_d}\int_{\mathcal S^{d-1}}\int_{0}^{\infty}\mbox{Card}(\mathcal O\cap\Im g\cap D_{\theta,r})drd\theta
				\\&= \frac{c}{\L(g)}\frac 1{c_d}\int_{\mathcal S^{d-1}}\int_{0}^{\infty}\sum_ {y\in \mathcal O\cap\Im f\cap D_{\theta,r}}1drd\theta
					\\&= \frac{c}{\L(g)}\frac 1{c_d}\int_{\mathcal S^{d-1}}\int_{0}^{\infty}\sum_ {y\in \mathcal O\cap\Im f\cap D_{\theta,r}}\mbox{Card}(g^{-1}(\{y\}))drd\theta
		\\&=\frac{c}{\L(g)}\frac 1{c_d}\int_{\mathcal S^{d-1}}\int_{0}^{\infty} \sum_{t\in[0,1]}\I_{\bigcup_{i\geq 1}(a_i,b_i)}(t)\I_{\{g(t)\in D_{\theta,r}\}} dr d\theta\end{align*}
		Thanks to the equality \eqref{eq:IndCauchy}, and using $|g'(t)|=\L(g)$ a.e., we deduce that
			\begin{align*}
		\mu\circ g^{-1}(\mathcal O\cap\Im g)	&\geq\frac{c}{\L(g)}\int_0^1 \I_{\bigcup_{i\geq 1} (a_i,b_i)}(t) |g'(t)| dt
			\\&=c\sum_{i\geq 1}(b_i-a_i)\\&=c\lambda\bigg(\bigcup_{i\geq 1}(a_i,b_i)\bigg).
				\end{align*}

Let us show that $\{t\in[0,1], \mbox{Card} (g^{-1}(\{g(t)\}))>1\}$ is negligible for $\lambda$.

Let $A\subset \Im g$ be a negligible set for the trace of $\mathcal H^1$ on $\Im g$. Then, $g^{-1}(A)$ is negligible for $\lambda$. Indeed,  there exists a Borel set $N$, such that $A\subset N$ and $\H^1(N)=0$. Since $g^{-1}(A)\subset g^{-1}(N)$, $\lambda(g^{-1}(A))\leq  \lambda(g^{-1}(N)).$
By Remark \ref{rem:h=lambda}, $\lambda(g^{-1}(N))= \frac{\H^1(N)}{\L(g)}$. Thus, $\lambda(g^{-1}(A))=0$.

Hence, the fact that $\{y\in\Im g, \mbox{Card} (g^{-1}(\{y\}))>1\}$ is a negligible set for the trace of $\mathcal H^1$ on $\Im g$ (Remark \ref{rem:card1}) implies that $\{t\in[0,1], \mbox{Card} (g^{-1}(\{g(t)\}))>1\}$ is negligible for $\lambda$.

 Consequently, 
	\begin{align*}
	\mu\bigg(\bigcup_{i\geq 1}(a_i,b_i)\bigg)
&=
	\mu\circ g^{-1}(\mathcal O\cap\Im g),
	\end{align*}
	and, thus, 
	$$\mu\bigg(\bigcup_{i\geq 1}(a_i,b_i)\bigg)\geq c\lambda\bigg(\bigcup_{i\geq 1}(a_i,b_i)\bigg).$$	This extends to every Borel subset of $[0,1]$ by outer regularity. Hence, $\mu\geq c\lambda$.
	\end{enumerate}

\end{proof}

Now, equipped with Lemma \ref{lem:UH}, let us turn to the proof of Theorem \ref{theo:sig-unknown} itself.

\section{Proof of Theorem \ref{theo:sig-unknown}}\label{sub:proof}

Note that $\L(g)-a_n<a_n\left\lfloor\frac{\L(g)}{a_n}\right\rfloor\leq \L(g)$. We set, for every $n\geq 1$, $f^*_n:=\hat{f}_{n,a_n\left\lfloor\frac{\L(g)}{a_n}\right\rfloor}$.

\subsection{Step 1}

	We will first  prove that $L^2\mathcal D\left(\frac{1}{n}\sum_{i=1}^{n}\delta_{T(f^*_n,X_i^n)},\M\right)+\Delta_n(f^*_n)$ converges in probability to 0 as $n$ goes to infinity.
	
	To begin with, let us consider the term $\Delta_n(f^*_n)$.
	\begin{lem}$\Delta_n(f^*_n)$ converges in probability to 0 as $n$ tends to infinity.
		\label{lem:Deltan->0}
	\end{lem}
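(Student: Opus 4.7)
The plan is to exploit the minimality of $f^*_n$ and compare $\Delta_n(f^*_n)$ to $\Delta_n$ evaluated at a suitable reparameterization of $g$. Writing $L_n=a_n\lfloor\L(g)/a_n\rfloor$, we have $L_n\le\L(g)\le\Lambda$ and $L_n\to\L(g)$. A natural admissible comparison curve is $f_n:[0,1]\to\R^d$ defined by $f_n(t)=g(s_n t)$, where $s_n=L_n/\L(g)\in[0,1]$. Since $g$ has constant speed $\L(g)$, the curve $f_n$ has length exactly $s_n\L(g)=L_n$, so it lies in the feasible set for $\hat f_{n,L_n}$, giving $\Delta_n(f^*_n)\le \Delta_n(f_n)$.

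The next step is to bound $\Delta_n(f_n)$ observation by observation. Using $f_n(U_i^n)\in\Im f_n$,
\[
d(X_i^n,\Im f_n)\le |X_i^n-f_n(U_i^n)|=|g(U_i^n)+\e_i^n-g(s_n U_i^n)|\le |\e_i^n|+\L(g)(1-s_n),
\]
where the last inequality uses that $g$, having constant speed $\L(g)$, is $\L(g)$-Lipschitz, and that $U_i^n\in[0,1]$. Since $V$ is increasing and satisfies $V(x+y)\le C(V(x)+V(y))$, this yields
\[
V(d(X_i^n,\Im f_n))\le C\,V(|\e_i^n|)+C\,V\bigl(\L(g)-L_n\bigr).
\]
Averaging over $i=1,\dots,n$ gives
\[
\Delta_n(f_n)\le \frac{C}{n}\sum_{i=1}^n V(|\e_i^n|)+C\,V(\L(g)-L_n).
\]

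It then suffices to send each term to $0$: the first one vanishes in probability by the standing hypothesis $\frac{1}{n}\sum_i V(|\e_i^n|)\to 0$ in probability, and the second is deterministic, tending to $0$ because $L_n\to\L(g)$ together with the continuity of $V$ at $0$ and $V(0)=0$. Combining with $\Delta_n(f^*_n)\le\Delta_n(f_n)$ concludes. There is no real obstacle in this argument; the only mild care needed is to build a comparison curve with length \emph{exactly} $L_n$ (rather than $g$ itself, which has length slightly larger than $L_n$), and this is precisely what the reparameterization $t\mapsto g(s_n t)$ achieves.
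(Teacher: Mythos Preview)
Your argument is correct and follows the same strategy as the paper: bound $\Delta_n(f^*_n)$ from above by $\Delta_n$ evaluated at (essentially) $g$, then invoke the noise hypothesis $\frac{1}{n}\sum_i V(|\e_i^n|)\to0$. The paper simply writes $\Delta_n(f^*_n)\le\Delta_n(g)$ and uses $d(g(U_i^n),\Im g)=0$ to get $\Delta_n(g)\le\frac{C}{n}\sum_i V(|\e_i^n|)$.

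Your version is in fact more careful on one point. Since $L_n=a_n\lfloor\L(g)/a_n\rfloor\le\L(g)$, the curve $g$ need not satisfy the constraint $\L(f)\le L_n$, so the inequality $\Delta_n(f^*_n)\le\Delta_n(g)$ is not literally justified by minimality. Your reparameterization $f_n(t)=g(s_n t)$ with $s_n=L_n/\L(g)$ produces a genuinely feasible comparison curve of length exactly $L_n$, at the price of the extra deterministic term $C\,V(\L(g)-L_n)$, which vanishes because $L_n\to\L(g)$ and $V$ is continuous at $0$ with $V(0)=0$. This cleanly closes the small gap while keeping the paper's idea intact.
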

	
	\begin{proof}[Proof of Lemma \ref{lem:Deltan->0}]
		We write \begin{align*}
		\Delta_n(f^*_n)&\leq \Delta_n(g)\\&=\frac{1}{n}\sum_{i=1}^{n}V(d(X_i^n, \Im g))=\frac{1}{n}\sum_{i=1}^{n}V(d(g(U_i^n)+\e_i^n, \Im g))\\&\leq
		\frac{C}{n}\sum_{i=1}^{n}V(d(g(U_i^n), \Im g))+\frac{C}{n}\sum_{i=1}^{n}V(|\e_i^n|)=
		\frac{C}{n}\sum_{i=1}^{n}V(|\e_i^n|),
		\end{align*}which converges in probability to 0. 
	\end{proof}

	For $n\geq 1 $, let $\D_n^*(\M)=\D\left(\frac{1}{n}\sum_{i=1}^{n}\delta_{T(f^*_n,X_i^n)},\M\right)$. The remaining part of Step 1 is dedicated to the convergence of $(\D_n^*(\M))_{n\geq 1}$.

	\begin{pro}
		\label{prop:Dn->0}
	If $\L(g)>0$, the sequence	$(\D_n^*(\M))_{n\geq 1}$ converges in probability to 0 as $n$ tends to infinity.
		\end {pro}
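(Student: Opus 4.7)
I argue by contradiction. If $\mathcal D_n^*(\M)$ did not tend to $0$ in probability, there would exist $\delta>0$ and a subsequence $(n_k)$ along which $\P(\mathcal D_{n_k}^*(\M)\ge\delta)\ge\delta$. I then pass to a further subsequence (still denoted $(n_k)$) along which all the relevant convergences in probability are upgraded to almost-sure convergences: $f^*_{n_k}\to f^*$ uniformly in $C([0,1])$, by Arzel\`{a}--Ascoli (the curves are $L_{n_k}$-Lipschitz with $L_{n_k}\le\L(g)$, and their images lie in a fixed ball since $\max_i|X_i^{n_k}|$ is bounded in probability and any optimal $f^*_{n_k}$ must place at least one point near some $X_i^{n_k}$); $\nu_{n_k}:=\frac{1}{n_k}\sum_{i=1}^{n_k}\delta_{T(f^*_{n_k},X_i^{n_k})}\to\nu$ weakly on $[0,1]$, by Prokhorov; $\Delta_{n_k}(f^*_{n_k})\to 0$ and $\frac{1}{n_k}\sum_i V(|\e_i^{n_k}|)\to 0$, by Lemma~\ref{lem:Deltan->0} and the noise hypothesis; $\bar\mu_{n_k}:=\frac{1}{n_k}\sum_i\mu_i^{n_k}\to\bar\mu\in\M$ weakly, by closedness of $\M$; and $\frac{1}{n_k}\sum_i\delta_{U_i^{n_k}}\to\bar\mu$ weakly, by concentration of bounded Lipschitz functionals of independent bounded variables.

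The central analytic step is to identify $f^*$ well enough for Lemma~\ref{lem:UH} to apply to it. From $V(a+b)\le C(V(a)+V(b))$ and $d(g(U_i^n),\Im f^*_n)\le d(X_i^n,\Im f^*_n)+|\e_i^n|$, one obtains $\frac{1}{n_k}\sum_i V(d(g(U_i^{n_k}),\Im f^*_{n_k}))\to 0$. Since $\Im f^*_{n_k}\to\Im f^*$ in Hausdorff distance, $u\mapsto d(g(u),\Im f^*_{n_k})$ converges uniformly to $u\mapsto d(g(u),\Im f^*)$; combining this with lower semi-continuity of $V$, Fatou, and the weak convergence of the empirical distributions of $U_i^{n_k}$ to $\bar\mu$, one deduces $\int V(d(g(u),\Im f^*))\,d\bar\mu(u)=0$. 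Because $V$ is strictly increasing with $V(0)=0$, $\bar\mu\ge c\lambda$, and $u\mapsto d(g(u),\Im f^*)$ is continuous, this forces $d(g(u),\Im f^*)=0$ for every $u\in[0,1]$, i.e.\ $\Im g\subseteq\Im f^*$. The chain $\L(g)=\mathcal H^1(\Im g)\le\mathcal H^1(\Im f^*)\le\L(f^*)\le\liminf L_{n_k}=\L(g)$ then gives $\L(f^*)=\L(g)=\mathcal H^1(\Im f^*)$, so $\mathcal H^1(\Im f^*\setminus\Im g)=0$; the Lipschitz bound together with $\L(f^*)=\L(g)$ forces $|f^{*\prime}|=\L(f^*)$ a.e., and Lemma~\ref{lem:UH} thus applies to $f^*$.

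I then match two weak limits of $\nu_{n_k}\circ (f^*_{n_k})^{-1}$. Uniform convergence of $f^*_{n_k}$ and weak convergence of $\nu_{n_k}$ yield $\nu_{n_k}\circ (f^*_{n_k})^{-1}\to\nu\circ (f^*)^{-1}$ weakly. For any bounded Lipschitz $H:\R^d\to\R$ with modulus of continuity $\omega_H$ and any $\eta>0$, the set $\{i:|f^*_{n_k}(T(f^*_{n_k},X_i^{n_k}))-g(U_i^{n_k})|>\eta\}$ is contained in $\{i:d(X_i^{n_k},\Im f^*_{n_k})>\eta/2\}\cup\{i:|\e_i^{n_k}|>\eta/2\}$, whose cardinality divided by $n_k$ is bounded by $[\Delta_{n_k}(f^*_{n_k})+\frac{1}{n_k}\sum_i V(|\e_i^{n_k}|)]/V(\eta/2)\to 0$ by Markov. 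Hence
\[
\Bigl|\tfrac{1}{n_k}\sum_i H(f^*_{n_k}(T(f^*_{n_k},X_i^{n_k})))-\tfrac{1}{n_k}\sum_i H(g(U_i^{n_k}))\Bigr|\le \omega_H(\eta)+o(1),
\]
and letting $\eta\to 0$, together with $\frac{1}{n_k}\sum_i\delta_{g(U_i^{n_k})}\to\bar\mu\circ g^{-1}$ weakly, yields $\nu\circ (f^*)^{-1}=\bar\mu\circ g^{-1}$. Lemma~\ref{lem:UH} applied to $g$ gives $\bar\mu\circ g^{-1}\ge c\mathcal H^1/\L(g)$ on $\Im g$; in view of $\mathcal H^1(\Im f^*\setminus\Im g)=0$ and $\L(f^*)=\L(g)$, the same inequality holds on $\Im f^*$ with $f^*$ in place of $g$, and the reverse implication of Lemma~\ref{lem:UH} applied to $f^*$ gives $\nu\in\M$, contradicting $\mathcal D(\nu_{n_k},\M)\ge\delta$.

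The principal obstacle is the weak-convergence identification $\nu\circ (f^*)^{-1}=\bar\mu\circ g^{-1}$: since $V$ is only lower semi-continuous and need not be comparable with $|\cdot|$ away from $0$, $V$-averaged distance bounds cannot be substituted directly into a modulus-of-continuity estimate for $H$. The Markov truncation at the threshold $\eta>0$, which converts the $V$-average bounds into a control on the proportion of indices that are \emph{far}, is the bridge that makes this passage possible.
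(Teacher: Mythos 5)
Your architecture is essentially the paper's: extract a subsequential limit $(f^*,\nu)$ of $\big(f^*_{n},\frac1n\sum_i\delta_{T(f^*_n,X_i^n)}\big)$, deduce $\Im g\subseteq\Im f^*$ from the vanishing of the $V$-distortion, upgrade this to $\L(f^*)=\L(g)=\mathcal H^1(\Im f^*)$ with constant speed so that Lemma \ref{lem:UH} applies to $f^*$, identify $\nu\circ(f^*)^{-1}$ with the limit of $\frac1n\sum_i\delta_{g(U_i^n)}$, and run the two directions of Lemma \ref{lem:UH} (for $g$, then for $f^*$) to get $\nu\in\M$ and the contradiction. Two of your local arguments are in fact tighter than the paper's: the Markov truncation at level $\eta$ converts the $V$-averaged bounds into a bound on the \emph{proportion} of far indices using only the hypothesis $\frac1n\sum_iV(|\e_i^n|)\to0$ (the paper's Lemma \ref{lem:nun'nun} instead passes through $\frac1n\sum_i|\e_i^n|\le(\frac1n\sum_i|\e_i^n|^2)^{1/2}$, which that hypothesis does not control for general $V$), and you bypass the intermediate Lemma \ref{lem:mint}. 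One presentational caveat: $f^*$, $\nu$ and the limit of $\frac1{n_k}\sum_i\delta_{U_i^{n_k}}$ are random objects, so the ``upgrade to a.s.\ convergence'' must be routed, as in the paper, through tightness, Prohorov and the Skorokhod representation theorem, with all subsequent identities holding almost surely; your argument survives this rephrasing unchanged.

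The one step that fails as written is the tightness of $(f^*_{n}(0))_{n\ge1}$, equivalently that the images stay in a ball of stochastically bounded radius. You justify it by claiming that $\max_i|X_i^{n}|$ is bounded in probability, which is false under the stated assumptions: the noise is not assumed bounded, and $\frac1n\sum_iV(|\e_i^n|)\to0$ in probability is compatible with $\max_i|\e_i^n|\to\infty$ (take a single outlier $\e_1^n$ with $V(|\e_1^n|)=o(n)$). The needed conclusion is nevertheless true, but it must be obtained as in Lemma \ref{lem:fn*nu'tight}: compare with the constant curve at the origin to get $\Delta_n(f^*_n)\le\Delta_n(0)=\frac1n\sum_iV(|X_i^n|)$, observe that the right-hand side is tight because $V(|X_i^n|)\le C\big(\max_tV(|g(t)|)+V(|\e_i^n|)\big)$, and use $V(d(0,\Im f^*_n))\le C\big(V(d(X_i^n,\Im f^*_n))+V(|X_i^n|)\big)$ to show that a curve escaping to infinity would force $\Delta_n(f^*_n)$ to be large with non-vanishing probability. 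With that substitution, your proof goes through.
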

		\subsubsection*{Proof of Proposition \ref{prop:Dn->0}}
	We will show that the sequence $(\D_n^*(\M))_{n\geq 1}$ is tight and that every limit point for the convergence in distribution  is $\delta_0$.
		We set $\nu^*_n=\frac 1 n \sum_{i=1}^{n}\delta_{\big(T(f^*_n,X_i^n),X_i^n\big)}$ and ${\nu^*_n}'=\frac 1 n \sum_{i=1}^{n}\delta_{\big(T(f^*_n,X_i^n),g(U_i^n)\big)}$.
	
	\begin{lem}\label{lem:nun'nun}
The difference $|\nu^*_n-{\nu^*_n}'|_{BL}$ converges in probability to 0 as $n$ tends to infinity.
	\end{lem}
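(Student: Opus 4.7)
The plan is to bound $|\nu^*_n - {\nu^*_n}'|_{BL}$ by a quantity depending only on the noises $\e_i^n$, and then to deduce the convergence in probability from the assumption that $\frac{1}{n}\sum_{i=1}^n V(|\e_i^n|) \to 0$ in probability, exploiting the strict monotonicity of $V$ at $0$.

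First I would fix a test function $h:[0,1]\times \R^d\to \R$ with $|h|_\infty \leq 1$ and 1-Lipschitz for the product metric. Since the first coordinates of the atoms defining $\nu^*_n$ and ${\nu^*_n}'$ coincide, one has
$$|\nu^*_n(h)-{\nu^*_n}'(h)|\leq \frac{1}{n}\sum_{i=1}^n\bigl|h(T(f^*_n,X_i^n),X_i^n)-h(T(f^*_n,X_i^n),g(U_i^n))\bigr|.$$
Combining the 1-Lipschitz property of $h$ with $|h|_\infty\leq 1$, each summand is at most $\min(|X_i^n-g(U_i^n)|,2)=\min(|\e_i^n|,2)$. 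Taking the supremum over $h$ yields
$$|\nu^*_n-{\nu^*_n}'|_{BL}\leq \frac{1}{n}\sum_{i=1}^n\min(|\e_i^n|,2).$$

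It then remains to show that this last quantity tends to $0$ in probability. Given $\delta\in(0,2)$, I would split the sum according to whether $|\e_i^n|\leq \delta$ or $|\e_i^n|>\delta$:
$$\frac{1}{n}\sum_{i=1}^n\min(|\e_i^n|,2)\leq \delta+\frac{2}{n}\#\{i:|\e_i^n|>\delta\}.$$
Since $V$ is strictly increasing with $V(0)=0$ and continuous at $0$, we have $V(\delta)>0$, and $|\e_i^n|>\delta$ implies $V(|\e_i^n|)\geq V(\delta)$. A Markov-type bound then gives
$$\frac{1}{n}\#\{i:|\e_i^n|>\delta\}\leq \frac{1}{V(\delta)}\cdot\frac{1}{n}\sum_{i=1}^n V(|\e_i^n|),$$
which tends to $0$ in probability by assumption. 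Hence, for every $\delta>0$, $\limsup_{n\to \infty}\P\bigl(\frac{1}{n}\sum_{i=1}^n\min(|\e_i^n|,2)>2\delta\bigr)=0$, and letting $\delta\downarrow 0$ yields the desired convergence in probability of $|\nu^*_n-{\nu^*_n}'|_{BL}$ to $0$.

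No serious obstacle is expected: the clipping by $2$ is precisely what makes the passage from a control in $V$-moments to a control that fits into the bounded Lipschitz framework possible, and the strict increase of $V$ at $0$ provides the needed tail estimate. The only point that requires a little care is to use the truncation $\min(\cdot,2)$ (dictated by $|h|_\infty\leq 1$) rather than $|\e_i^n|$ itself, since no integrability of $|\e_i^n|$ is assumed.
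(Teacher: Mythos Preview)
Your argument is correct. The key bound $|\nu^*_n-{\nu^*_n}'|_{BL}\le \frac{1}{n}\sum_i \min(|\e_i^n|,2)$, followed by the split at level $\delta$ and the Markov-type inequality $\frac{1}{n}\#\{i:|\e_i^n|>\delta\}\le V(\delta)^{-1}\cdot\frac{1}{n}\sum_i V(|\e_i^n|)$, is valid and yields the convergence in probability under the sole assumption $\frac{1}{n}\sum_i V(|\e_i^n|)\to 0$.

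The paper proceeds differently: it drops the boundedness constraint on $h$ and simply bounds $|\nu^*_n-{\nu^*_n}'|_{BL}\le \frac{1}{n}\sum_i|\e_i^n|\le\bigl(\frac{1}{n}\sum_i|\e_i^n|^2\bigr)^{1/2}$, asserting that this last quantity tends to $0$ in probability. That conclusion is immediate when $V(x)=x^2$, but for a general $V$ satisfying only the standing hypotheses (e.g.\ the bounded choice $V(x)=x/(1+x)$ highlighted in the paper) the assumption $\frac{1}{n}\sum_i V(|\e_i^n|)\to 0$ does \emph{not} imply $\frac{1}{n}\sum_i|\e_i^n|\to 0$: a single noise of size $n$ among $n$ observations already gives a counterexample. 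Your use of the truncation $\min(\cdot,2)$, which you correctly trace back to the condition $|h|_\infty\le 1$ in the bounded Lipschitz metric, is precisely what avoids this issue and makes the argument go through for arbitrary admissible $V$. In that sense your route is the more robust one.
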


\begin{proof}[Proof of Lemma \ref{lem:nun'nun}]We have
	\begin{align}
	&|\nu^*_n-{\nu^*_n}'|_{BL}\nonumber\\&=\sup\left\{|\nu^*_n(h)-{\nu^*_n}'(h)|: |h|_\infty\leq 1, \sup_{x\neq y}\frac{|h(x)-h(y)|}{|x-y|}\leq 1\right\}\nonumber\\&\leq
	\sup\left\{\frac 1 n\sum_{i=1}^{n}|h(T(f^*_n,X_i^n),X_i^n)-h(T(f^*_n,X_i^n),g(U_i^n))|: |h|_\infty\leq 1, \sup_{x\neq y}\frac{|h(x)-h(y)|}{|x-y|}\leq 1\right\}\nonumber\\&\leq \frac 1 n \sum_{i=1}^{n}|\e_i^n|\nonumber\\&\leq \left(\frac 1 n \sum_{i=1}^{n}|\e_i^n|^2\right)^{1/2},\label{eq:nun}
	\end{align}
	which converges in probability  to 0 as $n$ tends to infinity.
\end{proof}
	
\begin{lem}\label{lem:fn*nu'tight}
	The sequence $\left(f^*_n,{\nu^*_n}'\right)_{n\geq 1}$ is tight.
	\end{lem}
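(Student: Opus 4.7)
The plan is to establish tightness of the two components of $(f^*_n,{\nu^*_n}')$ separately and then invoke the fact that coordinatewise tightness on a product Polish space implies joint tightness.

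For the empirical measure ${\nu^*_n}'$ the situation is painless: by definition $T(f^*_n,X_i^n)\in[0,1]$, and $g(U_i^n)\in\Im g$, which is a compact subset of $\R^d$ because $g$ is continuous on $[0,1]$. Hence each ${\nu^*_n}'$ is almost surely supported in the fixed compact set $[0,1]\times\Im g$, so the family $({\nu^*_n}')_{n\geq 1}$ lies inside the weakly compact space of probabilities on that compact, and is automatically tight.

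For $f^*_n$ I would apply the Arzel\`a--Ascoli theorem in $C([0,1])$. Equicontinuity is immediate: by construction $\hat f_{n,L}$ is $L$-Lipschitz, and here $L=a_n\lfloor\L(g)/a_n\rfloor\leq\L(g)\leq\Lambda$, so every $f^*_n$ is $\Lambda$-Lipschitz. Because $\mathrm{diam}(\Im f^*_n)\leq\L(f^*_n)\leq\Lambda$, it then suffices to show that, for every $\eta>0$, a single point of $\Im f^*_n$ lies in a fixed bounded subset of $\R^d$ with probability at least $1-\eta$ uniformly in $n$. To produce such an anchor, I would combine $\Delta_n(f^*_n)\to 0$ in probability (Lemma \ref{lem:Deltan->0}) with the standing hypothesis $\frac1n\sum_{i=1}^nV(|\e_i^n|)\to 0$ in probability. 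A Markov/pigeonhole argument applied to these two empirical averages yields, on an event of probability at least $1-\eta$, at least one index $i$ for which both $V(d(X_i^n,\Im f^*_n))$ and $V(|\e_i^n|)$ are arbitrarily small. The quasi-subadditivity assumption $V(x+y)\leq C(V(x)+V(y))$ then controls $V(d(g(U_i^n),\Im f^*_n))$, and continuity of $V$ at $0$ together with strict monotonicity forces $d(g(U_i^n),\Im f^*_n)$ to be small. Since $\Im g$ is compact, this means $\Im f^*_n$ meets a fixed neighborhood of $\Im g$, which is the bounded anchor we need.

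Once each marginal family is tight, tightness of the pair follows from the standard product-tightness argument. The step I expect to be the most delicate is the extraction of a single index $i$ for which both summands are simultaneously small, and verifying that the resulting anchor set may be chosen independently of $n$; the Arzel\`a--Ascoli and product-tightness steps are routine, and the tightness of ${\nu^*_n}'$ is essentially free because both marginals live in a fixed compact set.
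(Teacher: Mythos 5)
Your proposal is correct, and it follows the paper for everything except the one step that actually requires work. Like the paper, you observe that ${\nu^*_n}'$ is supported in the fixed compact $[0,1]\times \Im g$, hence tight, and you reduce tightness of $(f^*_n)_{n\geq 1}$ in $C([0,1])$ to the uniform Lipschitz bound plus the tightness of a single anchor point (the paper cites exactly this criterion and anchors $f^*_n(0)$). Where you diverge is in producing the anchor. The paper argues by contradiction: it compares $\Delta_n(f^*_n)$ with $\Delta_n(0)=\frac1n\sum_i V(|X_i^n|)$, which is a tight sequence, and lower-bounds the distortion of a curve whose image is far from the origin via the quasi-subadditivity of $V$, deriving $\eta-\e\leq P(\Delta_{n_\e}(f^*_{n_\e})>M_\e)\leq\e$. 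You instead use Lemma \ref{lem:Deltan->0} together with $\frac1n\sum_i V(|\e_i^n|)\to 0$ and the pigeonhole bound $\min_i s_i\leq\frac1n\sum_i s_i$ to exhibit, with high probability, an index $i$ with $V(d(g(U_i^n),\Im f^*_n))\leq C\e$; strict monotonicity of $V$ (continuity at $0$ is not actually needed for this implication, since $V(\delta)>0$ for $\delta>0$) then pins $\Im f^*_n$ to a fixed neighborhood of the compact $\Im g$, and the diameter bound $\mathrm{diam}(\Im f^*_n)\leq\Lambda$ finishes. Both routes are legitimate; yours is more direct and has the advantage of working verbatim when $V$ is bounded (e.g.\ $V(x)=x/(1+x)$), a case in which the paper's event $\{V(d(0,\Im f^*_n))>2CM_\e\}$ may be vacuous and the contradiction requires more care, whereas the paper's version uses only tightness of $\frac1n\sum_i V(|X_i^n|)$ rather than the full convergence statements. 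Two small housekeeping points for your write-up: the uniform high-probability bound you obtain holds for $n$ large, and the finitely many remaining indices are covered because any finite family of random variables is tight; and the final passage from marginal to joint tightness is indeed the standard product-of-compacts argument.
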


\begin{proof}[Proof of Lemma \ref{lem:fn*nu'tight}]
	Observe that, for every $n\geq 1$, ${\nu^*_n}'$ is supported in a compact set $[0,1]\times \Im g$, and thus, belongs to a compact set of measures for the topology of weak convergence, so that the sequence $({\nu^*_n}')_{n\geq 1}$ is tight.

	Besides, for every $n\geq 1$, $f^*_n$ is $\L(g)$--Lipschitz. Thus,  to obtain tightness of the sequence $(f^*_n)_{n\geq 1}$ in $C([0,1])$, it suffices to show that the sequence $(f^*_n(0))_{n\geq 1}$ is tight (see, e.g.,  \cite[Lemma 2.1]{Prokh}). If it is not the  case, there exists $\eta>0$ such that, for every $A>0$, there exists $n\geq 1$ such that  $P(|f^*_n(0)|>A)\geq\eta>0$, and hence, as the length $\L(g)$ is finite,   \begin{equation}
	P(d(0,\Im f^*_n)>A-\L(g))\geq\eta>0.\label{eq:mintight}
	\end{equation}
	Observe that $$\Delta_n(f^*_n)\leq \Delta_n(0)=\frac 1 n \sum_{i=1}^{n}V(|X_i^n|).$$
	Hence, since the sequence $\left(\frac 1 n \sum_{i=1}^{n}V(|X_i^n|)\right)_{n\geq 1}$ is tight, for every $\e>0$, there exists $M_\e$ such that for all $n\geq 1$, \begin{equation}
	P(\Delta_n(f^*_n)>M_\e)\leq P\left(\frac 1 n \sum_{i=1}^{n}V(|X_i^n|)>M_\e\right)\leq \e.\label{eq:borneX}
	\end{equation}
	
	Moreover, \begin{align*}
	P\left(\Delta_n(f^*_n)>M_\e\right)&=P\left(\frac{1}{n}\sum_{i=1}^{n}V(d(X_i^n,\Im f^*_n))>M_\e\right)
	\\&\geq
	P\left(\min_{t\in[0,1]}\frac{V(|f^*_n(t)|)}{C}-\frac{1}{n}\sum_{i=1}^{n}V(|X_i^n|)>M_\e\right)\\&\geq
	P\left(V(d(0,\Im f^*_n))>2CM_\e\right)-P\left(\frac{1}{n}\sum_{i=1}^{n}V(|X_i^n|)>M_\e\right)\\&\geq P\left(V(d(0,\Im f^*_n))>2CM_\e\right)-\e,
	\end{align*}according to \eqref{eq:borneX}. Thus, thanks to \eqref{eq:mintight}, there exists $n_\e$ such that 
	$$\eta-\e\leq P\left(\Delta_{n_\e}(f^*_{n_\e})>M_\e\right)\leq \e,$$ which leads to a contradiction since $\e$ is arbitrary.
	Consequently, $(f^*_n(0))_{n\geq 1}$ is tight.
\end{proof}

From  Lemma \ref{lem:fn*nu'tight}, the sequence
$(\D_n^*(\M))_{n\geq 1}$ is tight, and thus, by Prohorov's theorem, $(\D_n^*(\M))_{n\geq 1}$ has a limit point for weak convergence. So, there exists a weakly convergent  subsequence. Let us consider an arbitrary such convergent subsequence $(\D_{\sigma(n)}^*(\M))_{n\geq 1}$,  where $\sigma:\N\to\N$ denotes a strictly increasing function, and show that the corresponding limit point  is $\delta_0$. The sequence $\left(f^*_n,{\nu^*_n}\right)_{n\geq 1}$ is tight, by Lemma \ref{lem:nun'nun} and Lemma \ref{lem:fn*nu'tight}. So, by Prohorov's theorem, we may assume, up to extracting  a further subsequence, that the sequence  
$\big(f^*_{\sigma(n)},\nu^*_{\sigma(n)}\big)_{n\geq 1}$ converges in distribution to a tuple $(\varphi^*,\nu^*)$, where $\varphi^*$ is a random function, and $\nu^*$ a random probability measure on $[0,1]\times \R^d$. 
According to Skorokhod's representation theorem, there exist a random sequence  $(\tilde{f}^*_{\sigma(n)}, \tilde{\nu}^*_{\sigma(n)})_{n\geq 1}$ and a tuple $(\tilde{\varphi}^*, \tilde{\nu}^*)$ with the same distribution as  $(f^*_{\sigma(n)},\nu^*_{\sigma(n)})_{n\geq 1}$ and  $(\varphi^*,\nu^*)$ respectively, such that $(\tilde{f}^*_{\sigma(n)}, \tilde{\nu}^*_{\sigma(n)})_{n\geq 1}$ converges almost surely to $(\tilde{\varphi}^*, \tilde{\nu}^*)$.
Moreover, up to considering an extension of the probability space where this representation holds, there exist random vectors $\tilde{X}_i^n$, $i=1,\dots, n$, such that  $\big((\tilde{X}_i^{\sigma(n)})_{1\leq i\leq \sigma(n)},\tilde{f}^*_{\sigma(n)}\big)$ has the same joint distribution as $\big((X_i^{\sigma(n)})_{1\leq i\leq \sigma(n)},f^*_{\sigma(n)}\big)$. In the sequel, to lighten notation, the tilde will be omitted. The marginal distributions of $\nu^*$ will be denoted by $\nu^{*,i}$, $i=1,2$.

\begin{lem}
	Assume that $\L(g)>0$.
Then,	we have \label{lem:nu2M}$\nu^{*,2}\in \Mc{g}$ a.s.
\end{lem}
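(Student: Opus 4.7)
The plan is to realize $\nu^{*,2}$ as an almost-sure weak limit of pushforward measures $g_*\bar\mu_n \in \Mc{g}$, where $\bar\mu_n := \frac{1}{n}\sum_{i=1}^{n}\mu_i^n$, and then invoke the weak-closedness of $\Mc{g}$. After the Skorokhod construction, the almost-sure convergence $\nu^*_{\sigma(n)} \to \nu^*$ yields $\frac{1}{\sigma(n)}\sum_{i=1}^{\sigma(n)}\delta_{X_i^{\sigma(n)}} \to \nu^{*,2}$ weakly a.s. Convexity of $\M$ (immediate from the pointwise condition $\mu \ge c\lambda$) gives $\bar\mu_n \in \M$, and Lemma \ref{lem:UH} places $g_*\bar\mu_n$ in $\Mc{g}$ for every $n$. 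The task therefore reduces to showing
\begin{equation*}
\Big|\tfrac{1}{n}\sum_{i=1}^{n}\delta_{X_i^n} - g_*\bar\mu_n\Big|_{BL} \longrightarrow 0 \text{ in probability,}
\end{equation*}
which I will establish in two steps.

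First, replace $X_i^n$ by $g(U_i^n)$: for any $h$ in the unit BL-ball, $|h(X_i^n)-h(g(U_i^n))| \le \min(2,|\e_i^n|)$, so the BL-distance between the two empirical measures is dominated by $\frac{1}{n}\sum_i \min(2,|\e_i^n|)$. Since $V$ is strictly increasing with $V(\eta)>0$ for $\eta>0$, $\min(2,|\e_i^n|) \le \eta + 2V(|\e_i^n|)/V(\eta)$; combined with the hypothesis $\frac{1}{n}\sum V(|\e_i^n|) \to 0$ in probability, this bound is at most $\eta$ in the limit for every $\eta>0$, hence vanishes in probability.

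Second, replace $\frac{1}{n}\sum \delta_{g(U_i^n)}$ by $g_*\bar\mu_n$. Since $g$ is $\L(g)$-Lipschitz on $[0,1]$, composing a unit BL function on $\R^d$ with $g$ yields a BL function on $[0,1]$ of norm at most $\max(1,\L(g))$, so it suffices to prove $|\frac{1}{n}\sum\delta_{U_i^n} - \bar\mu_n|_{BL} \to 0$ in probability on $[0,1]$. For a fixed BL test function $h$, independence of the $U_i^n$ gives $\Var(\frac{1}{n}\sum h(U_i^n)) \le |h|_\infty^2/n$, so $\frac{1}{n}\sum h(U_i^n) - \int h\,d\bar\mu_n \to 0$ in $L^2$. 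To pass from pointwise to uniform control over the unit BL-ball, I exploit that this ball is totally bounded in $C([0,1])$ by Arzel\`a--Ascoli (here compactness of $[0,1]$ is essential): covering by a finite $\delta$-net reduces the uniform bound to a finite maximum, plus an error of order $\delta$, then one lets $\delta\downarrow 0$ after $n\to\infty$. This Glivenko--Cantelli step for a non-identically-distributed triangular array is the main technical point.

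Combining the two bounds and extracting a subsubsequence along which both approximations hold almost surely, one obtains $g_*\bar\mu_{\sigma(n)} \to \nu^{*,2}$ weakly a.s. It remains to check that $\Mc{g}$ is weakly closed: the support constraint is preserved by Portmanteau since $\Im g$ is closed, and for any closed $F \subset \R^d$ and $m_n \in \Mc{g}$, the inequality $m_n(F) \ge c\mathcal H^1(F\cap\Im g)/\L(g)$ together with $\limsup m_n(F) \le m(F)$ (Portmanteau for closed sets) yields the same bound for the weak limit $m$; inner regularity of the finite measures $m$ and $\mathcal H^1|_{\Im g}$ extends this to all Borel subsets of $\Im g$. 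Therefore $\nu^{*,2} \in \Mc{g}$ almost surely.
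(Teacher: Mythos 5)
Your proof is correct, and while the overall skeleton matches the paper's (identify $\nu^{*,2}$ with a limit of the pushforwards $g_*\bar\mu_n$, then use convexity of $\M$, Lemma \ref{lem:UH} and closedness of $\Mc{g}$), the way you identify the limit is genuinely different. The paper fixes a frequency $x$, applies the weak law of large numbers for triangular arrays to $h_x(u)=\exp(i\langle u,x\rangle)$, obtains equality of the characteristic functions of $\nu^{*,2}$ and of a weak limit $m$ of $\frac1n\sum_i\mu_i^n\circ g^{-1}$ on a countable dense set of frequencies, and concludes $\nu^{*,2}=m$; you instead prove a uniform law of large numbers over the bounded-Lipschitz unit ball, using the variance bound for each test function and total boundedness of the ball in $C([0,1])$ via Arzel\`a--Ascoli. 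Your route buys three things: (i) you avoid the preliminary tightness/extraction step for $(\frac1n\sum_i\mu_i^n\circ g^{-1})_n$, since you show directly that this deterministic sequence converges to $\nu^{*,2}$ along a subsequence; (ii) your truncation bound $\min(2,|\e_i^n|)\le\eta+2V(|\e_i^n|)/V(\eta)$ controls the noise term under exactly the stated hypothesis $\frac1n\sum_iV(|\e_i^n|)\to0$ for a general $V$, whereas the paper's Lemma \ref{lem:nun'nun} passes through $\bigl(\frac1n\sum_i|\e_i^n|^2\bigr)^{1/2}$, which is only justified for specific $V$; (iii) you actually verify the weak closedness of $\Mc{g}$ (Portmanteau for closed sets plus inner regularity), a point the paper asserts without proof. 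The paper's characteristic-function argument is shorter but relies on the same underlying ingredients. The only cosmetic issue is the wording ``subsubsequence'' --- you should say explicitly that convergence in probability yields a deterministic further subsequence of $\sigma(n)$ along which the BL-distances vanish almost surely; since the conclusion $\nu^{*,2}\in\Mc{g}$ a.s.\ does not depend on the subsequence, this is harmless.
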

  \begin{proof}[Proof of Lemma \ref{lem:nu2M}]
   
  	 First, recall that $\nu_{\sigma(n)}^*$ converges almost surely to $\nu^*$. This implies that  $\frac 1 {\sigma(n)}\sum_{i=1}^{\sigma(n)}\delta_{g(U_i^{\sigma(n)})}$ converges almost surely to $\nu^{*,2}$.
  	 
  	 For $n\geq 1$, the measure $\frac 1 n \sum_{i=1}^{n}\mu_i^n\circ g^{-1}$  is supported in the compact set $\Im g$, so that 	$\left(\frac 1 n \sum_{i=1}^{n}\mu_i^n\circ g^{-1}\right)_{n\geq 1}$  is tight. Thus, up to a further extraction, we may assume that $\frac 1 {\sigma(n)} \sum_{i=1}^{{\sigma(n)}}\mu_i^{\sigma(n)}\circ g^{-1}$ converges weakly to a measure $m$.

Besides, for every continuous and bounded function $h$, by the weak law of large numbers for triangular arrays, we have $\frac 1 n \sum_{i=1}^n h(g(U_i^n)) - \frac 1 n \sum_{i=1}^n E[ h(g(U_i^n))] \to0$ in probability almost surely. Consequently, for every $x\in\mathbb R^d$,  
$\int \exp(i\langle u,x\rangle)d\nu^{*,2}(u)=\int \exp(i\langle u,x\rangle)dm(u)$ almost surely. Hence, almost surely, for every $x\in\mathbb Q^d$,
$$\int \exp(i\langle u,x\rangle)d\nu^{*,2}(u)=\int \exp(i\langle u,x\rangle)dm(u).$$

 We obtain that $\nu^{2,*}=m$ a.s..
 
  By Lemma \ref{lem:UH}, for $i=1,\dots,n$, $\mu_i^n\circ g^{-1}\in \Mc{g}$. As $ \Mc{g}$ is convex, for every $n\geq 1$,  $\frac 1 n \sum_{i=1}^{n}\mu_i^n\circ g^{-1}\in \Mc{g}$. Since $\Mc{g}$ is closed, $\nu^{2,*}\in\Mc{g}$ a.s..

 	 \end{proof}

\begin{lem}\label{lem:mint}
	Denoting by $(T,Z)$ the identity on $[0,1]\times \R^d$,  we have \begin{equation*}
\nu^*\big(	|Z-\varphi^*(T)|=\min_{t\in[0,1]}|Z-\varphi^*(t)|\big)=1 \mbox{ a.s.}\label{eq:mint}
	\end{equation*}
\end{lem}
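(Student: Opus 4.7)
The plan is to define the nonnegative ``defect function''
\[
H(t,z,f) := |z-f(t)| - \min_{s\in[0,1]}|z-f(s)|
\]
on $[0,1]\times\R^d\times C([0,1])$ and show that $H(T,Z,\varphi^*)=0$ holds $\nu^*$-a.s.\ by passing to the limit in the trivial identity $\int H(t,z,f^*_n)\,d\nu^*_n(t,z)=0$, which is immediate from the definition of $T(f,x)=\max\argmin_t|x-f(t)|$.

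First I would check that $H$ is continuous: joint continuity of $(t,z,f)\mapsto|z-f(t)|$ is clear from the uniform topology on $C([0,1])$, and continuity of $(z,f)\mapsto\min_{s\in[0,1]}|z-f(s)|$ follows from $[0,1]$ being compact (one even has the Lipschitz bound $|\min_s|z-f(s)|-\min_s|z'-f'(s)||\le |z-z'|+\|f-f'\|_\infty$). Clearly $H\ge 0$.

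The main technical step is passing to the limit. I would truncate, setting $H_M:=H\wedge M$ for fixed $M>0$, which is bounded and continuous. By definition of $T(f^*_n,\cdot)$ the measure $\nu^*_n$ is supported on $\{H(\cdot,\cdot,f^*_n)=0\}$, so $\int H_M(t,z,f^*_n)\,d\nu^*_n(t,z)=0$ for every $n$. Now I want
\[
\int H_M(t,z,f^*_{\sigma(n)})\,d\nu^*_{\sigma(n)}(t,z)\;\longrightarrow\;\int H_M(t,z,\varphi^*)\,d\nu^*(t,z)\qquad\text{a.s.}
\]
Writing $\psi_f(t,z):=H_M(t,z,f)$, decompose
\[
\Bigl|\int\psi_{f^*_n}\,d\nu^*_n-\int\psi_{\varphi^*}\,d\nu^*\Bigr|
\le \int|\psi_{f^*_n}-\psi_{\varphi^*}|\,d\nu^*_n+\Bigl|\int\psi_{\varphi^*}\,d(\nu^*_n-\nu^*)\Bigr|.
\]
The second term tends to $0$ almost surely by weak convergence $\nu^*_n\to\nu^*$ and boundedness/continuity of $\psi_{\varphi^*}$. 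For the first, fix $\e>0$ and use tightness of $(\nu^*_n)$ to choose a compact $K\subset[0,1]\times\R^d$ with $\nu^*_n(K^c)<\e$ for all $n$; since $f^*_n\to\varphi^*$ uniformly and $H_M$ is uniformly continuous on the compact set $K\times\{\varphi^*,f^*_1,f^*_2,\dots\}$ (the orbit being relatively compact in $C([0,1])$), we have $\sup_{(t,z)\in K}|\psi_{f^*_n}(t,z)-\psi_{\varphi^*}(t,z)|\to 0$, while the integral over $K^c$ is bounded by $2M\e$. This is the step that requires some care, but it is standard once the joint a.s.\ convergence from Skorokhod is exploited.

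Combining, $\int H_M(t,z,\varphi^*)\,d\nu^*(t,z)=0$ almost surely. Since $H_M\ge 0$, this forces $H_M(T,Z,\varphi^*)=0$ for $\nu^*$-almost every $(T,Z)$, almost surely; and $H_M=H\wedge M$ vanishes iff $H$ itself vanishes, giving $|Z-\varphi^*(T)|=\min_{t\in[0,1]}|Z-\varphi^*(t)|$ for $\nu^*$-a.e.\ $(T,Z)$, almost surely, as claimed.
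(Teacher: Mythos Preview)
Your proof is correct. The underlying idea is the same as the paper's---exploit that each $\nu^*_{\sigma(n)}$ is supported on $\{H(\cdot,\cdot,f^*_{\sigma(n)})=0\}$ and pass to the limit via the joint almost-sure convergence $(f^*_{\sigma(n)},\nu^*_{\sigma(n)})\to(\varphi^*,\nu^*)$---but the technical device differs. The paper does not integrate a truncation of $H$; instead it works directly with the open sets $\mathcal O^\varepsilon_f=\{|Z-f(T)|>\min_t|Z-f(t)|+\varepsilon\}$ and applies Portmanteau: $\nu^*(\mathcal O^\varepsilon_{\varphi^*})\le\liminf_n\nu^*_{\sigma(n)}(\mathcal O^\varepsilon_{\varphi^*})$. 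Uniform convergence $f^*_{\sigma(n)}\to\varphi^*$ yields the set inclusion $\mathcal O^\varepsilon_{\varphi^*}\subset\mathcal O^{\varepsilon/2}_{f^*_{\sigma(n)}}$ for large $n$, and the latter has $\nu^*_{\sigma(n)}$-measure zero; hence $\nu^*(\mathcal O^\varepsilon_{\varphi^*})=0$, then let $\varepsilon\downarrow 0$. This avoids both truncation and the compactness bookkeeping.

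In your argument the tightness detour is unnecessary: from $|H(t,z,f)-H(t,z,f')|\le |f(t)-f'(t)|+\bigl|\min_s|z-f(s)|-\min_s|z-f'(s)|\bigr|\le 2\|f-f'\|_\infty$, the bound $\|\psi_{f^*_{\sigma(n)}}-\psi_{\varphi^*}\|_\infty\le 2\|f^*_{\sigma(n)}-\varphi^*\|_\infty\to 0$ holds globally, so the first term in your decomposition goes to zero without restricting to a compact $K$.
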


\begin{proof}[Proof of Lemma \ref{lem:mint}]
	For a curve $f$, define, for $\e>0$, the open set
	$$\mathcal O^\e_f=\{|Z-f(T)|>\min_{t\in[0,1]}|Z-f(t)|+\e\}.$$ 
	By the Portmanteau lemma,
	$\nu^*(\mathcal O^\e_{\varphi^*})\leq \liminf \nu^*_{\sigma(n)}(\mathcal O^\e_{\varphi^*})$ a.s., and 
	by definition of $f_{\sigma(n)}^*$, for every $\delta>0$, $\nu^*_{\sigma(n)}\big(\mathcal O^\delta_{f^*_{\sigma(n)}}\big)=0$.
	Moreover, thanks to the convergence of $f_{\sigma(n)}^*$ to $\varphi^*$, $\mathcal O^\e_{\varphi^*}\subset \mathcal O_{f^*_{\sigma(n)}}^{\e/2}$ a.s. as soon as  $n$ is large enough. Thus, $\nu^*(\mathcal O^\e_{\varphi^*})=0$ a.s., and, letting $\e$ tend to 0, we get $$\nu^*\big(|Z-\varphi^*(T)|>\min_{t\in[0,1]}|Z-\varphi^*(t)|\big)=0\mbox{ a.s.},$$ that is $$\nu^*\big(|Z-\varphi^*(T)|=\min_{t\in[0,1]}|Z-\varphi^*(t)|\big)=1\mbox{ a.s.},$$ as desired.
\end{proof}

\begin{lem}\label{lem:delta*=0}
	We have $\int |z-\varphi^*(t)|^2d\nu^*(t,z)=0$ almost surely.
\end{lem}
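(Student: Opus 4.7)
The plan is to show that $\nu^*$ is almost surely concentrated on the graph of $\varphi^*$, from which the stated identity follows at once. The starting point is Lemma~\ref{lem:Deltan->0}: $\Delta_n(f^*_n)\to 0$ in probability, so by extracting a further subsequence of $\sigma$ (which I still denote $\sigma$) one may assume this convergence is almost sure, simultaneously with the Skorokhod almost-sure convergences $f^*_{\sigma(n)}\to \varphi^*$ in $C([0,1])$ and $\nu^*_{\sigma(n)}\to \nu^*$ weakly.

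Next, I would rewrite $\Delta_n$ as an integral against $\nu^*_n$. By definition of $T$, the point $f^*_n(T(f^*_n,X_i^n))$ realizes the minimum of $t\mapsto |X_i^n-f^*_n(t)|$, so that
\[
\int V(|z-f^*_n(t)|)\,d\nu^*_n(t,z) = \frac{1}{n}\sum_{i=1}^n V\bigl(d(X_i^n,\Im f^*_n)\bigr) = \Delta_n(f^*_n).
\]
Fix $\e>0$. Since $V$ is strictly increasing with $V(0)=0$, we have $V(\e)>0$, and a Markov-type bound gives
\[
\nu^*_{\sigma(n)}\bigl(\{(t,z):\,|z-f^*_{\sigma(n)}(t)|\geq \e\}\bigr) \leq \frac{\Delta_{\sigma(n)}(f^*_{\sigma(n)})}{V(\e)} \longrightarrow 0 \quad \text{a.s.}
\]

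To transfer this control from $f^*_{\sigma(n)}$ to $\varphi^*$, I would use the uniform convergence $\sup_{t}|f^*_{\sigma(n)}(t)-\varphi^*(t)|\to 0$ a.s.: for $n$ large enough (depending on $\omega$),
\[
O_\e := \{(t,z):\, |z-\varphi^*(t)|>2\e\} \subset \{(t,z):\, |z-f^*_{\sigma(n)}(t)|>\e\},
\]
and $O_\e$ is open since $\varphi^*$ is continuous. The open-set form of the Portmanteau lemma applied to $\nu^*_{\sigma(n)}\to\nu^*$ then yields $\nu^*(O_\e) \leq \liminf_n \nu^*_{\sigma(n)}(O_\e) = 0$ almost surely. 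Letting $\e$ tend to $0$ along a sequence, one concludes $\nu^*(\{(t,z):\,z\neq \varphi^*(t)\})=0$ a.s., so $z=\varphi^*(t)$ $\nu^*$-almost surely, and therefore $\int |z-\varphi^*(t)|^2\,d\nu^*(t,z)=0$ a.s.

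The main obstacle is really the bookkeeping of the successive subsequence extractions so that all the a.s.\ convergences coexist under a single Skorokhod coupling; once this is set up, the core of the argument reduces to a one-line Markov estimate combined with the open-set form of Portmanteau. It is noteworthy that only the strict monotonicity of $V$ at $0$ (through $V(\e)>0$) enters the proof, so no further regularity of $V$ is required, and no square-integrability of the noise is invoked — the conclusion with $|\cdot|^2$ holds because $\nu^*$ is actually \emph{supported} on $\{z=\varphi^*(t)\}$.
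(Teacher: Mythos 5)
Your proof is correct, but it takes a genuinely different route from the paper's. The paper transfers the distortion control from the observations to the signal points: it introduces $\Delta_n'(f)=\frac1n\sum_{i}V(d(g(U_i^n),\Im f))$, shows $\Delta_n'(f^*_n)\to0$ by invoking the quasi-subadditivity $V(x+y)\le C(V(x)+V(y))$ and the noise assumption a second time, then bounds $\frac1n\sum_i V(|g(U_i^n)-\varphi^*(T(f^*_n,X_i^n))|)$ and passes to the limit against ${\nu^*_n}'$ using the lower semi-continuity of $(t,z)\mapsto V(|z-\varphi^*(t)|)$, obtaining $\int V(|z-\varphi^*(t)|)\,d\nu^*(t,z)=0$ a.s.\ (the stated $|\cdot|^2$ version follows since $V>0$ away from $0$). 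You instead stay with $\nu^*_n$ and the $X_i^n$ themselves: the identity $\int V(|z-f^*_n(t)|)\,d\nu^*_n(t,z)=\Delta_n(f^*_n)$, a Markov bound with the constant $V(\e)>0$, the uniform convergence $f^*_{\sigma(n)}\to\varphi^*$, and the open-set form of the Portmanteau lemma give $\nu^*(\{z\neq\varphi^*(t)\})=0$ a.s. This is essentially the mechanism of the paper's own Lemma \ref{lem:mint}, applied to the sets $\{|z-\varphi^*(t)|>2\e\}$ rather than to the excess-distance sets; it is somewhat more economical, since within this lemma it uses only $V(0)=0$ and strict monotonicity (the noise assumption and the constant $C$ enter only through Lemma \ref{lem:Deltan->0}, which you invoke), and it directly delivers $\nu^*(Z=\varphi^*(T))=1$ a.s., which is the form in which the lemma is actually used later in Lemma \ref{lem:ImL=}. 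Your additional subsequence extraction, upgrading $\Delta_{\sigma(n)}(f^*_{\sigma(n)})\to0$ from convergence in probability to almost sure convergence, is legitimate in this proof architecture: Step~1 aims at identifying the limit of the already-convergent subsequence $(\D^*_{\sigma(n)}(\M))_{n\ge1}$, and any further extraction preserves that limit; one only needs to note that the law of $\Delta_{\sigma(n)}(f^*_{\sigma(n)})$, being a measurable functional of $\big((X_i^{\sigma(n)})_i,f^*_{\sigma(n)}\big)$, is unchanged under the Skorokhod coupling, so the convergence in probability survives the change of probability space.
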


\begin{proof}[Proof of Lemma \ref{lem:delta*=0}]
	First, recall that $\Delta_n(f^*_n)$ converges in probability to 0 as $n$ tends to infinity by Lemma \ref{lem:Deltan->0}.
	Let $$\Delta_n'(f)=\frac{1}{n}\sum_{i=1}^{n}V(d(g(U_i^n),\Im f)).$$
	 We have \begin{align*}
	\Delta_n'(f^*_n)&\leq \frac{C}{n}\sum_{i=1}^{n}V(d(X_i^n,\Im f_n^*))
+\frac Cn\sum_{i=1}^{n}V(|\e_i^n|)\\&=C\Delta_n(f_n^*)+\frac Cn\sum_{i=1}^{n}V(|\e_i^n|).
	\end{align*} Thus, $\Delta'_n(f^*_n)$  converges in probability to 0, thanks to the convergence of $\Delta_n(f^*_n)$ and the assumption on the $\e_i^n$.
	Moreover, 
	\begin{align*}
	&\frac{1}{n}\sum_{i=1}^{n}V(|g(U_i^n)-\varphi^*(T(f^*_n,X_i^n))|) \\& =
	\frac{1}{{n}}\sum_{i=1}^{{n}}V\big(|g(U_i^{n})-f^*_n(T(f^*_{n},X_i^{n}))+f^*_n(T(f^*_{n},X_i^{n}))-\varphi^*(T(f^*_{n},X_i^{n}))|\big)
	\\&\leq \frac C n \sum_{i=1}^nV(d(g(U_i^n),\Im f_n^*))
+\frac{C}{n}\sum_{i=1}^{n} V(|f^*_n(T(f^*_n,X_i^n))-\varphi^*(T(f^*_n,X_i^n))|),
	\end{align*} 
	so that $\frac{1}{{\sigma(n)}}\sum_{i=1}^{{\sigma(n)}}V(|g(U_i^{\sigma(n)})-\varphi^*(T(f^*_{\sigma(n)},X_i^{\sigma(n)})|)$ converges in probability to 0, using the convergence of $\Delta_n'(f^*_n)$,  the uniform
	 convergence of $f^*_{\sigma(n)}$ to $\varphi^*$, and the fact that $V$ is continuous, with $V(0)=0$.
	The function $\psi$ defined by $\psi(z,t)=|z-\varphi^*(t)|$ for $(z,t)\in \Im g\times [0,1]$ is lower semi-continuous.
	This implies that, almost surely,
	  $$\liminf_{n\to \infty}\frac{1}{\sigma(n)}\sum_{i=1}^{\sigma(n)}V(|g(U_i^{\sigma(n)})-\varphi^*( T(f^*_{\sigma(n)},X_i^{\sigma(n)}))|)\geq  \int V(|z-\varphi^*(t)|)d\nu^*(t,z).$$
	Consequently, $\int V(|z-\varphi^*(t)|)d\nu^*(t,z)=0$ a.s.
\end{proof}

Note that $\L(\varphi^*)\leq \L(g)$ a.s., by the lower semi-continuity property of the length (see, e.g., \citet[Theorem 2.1.2]{AlexRes}). Together with Lemma \ref{lem:delta*=0}, this property allows to show the next result.
\begin{lem}\label{lem:ImL=}
Assume $\L(g)>0$.	We have $\Im g= \Im\varphi^*$ a.s. and $\L(\varphi^*)=\L(g)$ a.s..
\end{lem}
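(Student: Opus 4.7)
My plan is first to argue that $\nu^{*,2}$ is concentrated on $\Im \varphi^*$, combine this with Lemma \ref{lem:nu2M} to force the inclusion $\Im g \subset \Im \varphi^*$, then chain four inequalities to recover $\L(\varphi^*) = \L(g)$ and matching $1$-dimensional Hausdorff measures, and finally rule out $\Im \varphi^* \setminus \Im g \neq \emptyset$ via a connectedness argument on $\Im \varphi^*$.

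For the first reduction, Lemma \ref{lem:delta*=0} yields $\int V(|z-\varphi^*(t)|)\,d\nu^*(t,z) = 0$ a.s., and since $V$ is strictly increasing with $V(0)=0$, this forces $z = \varphi^*(t)$ for $\nu^*$-almost every $(t,z)$; hence $\nu^{*,2}$ is almost surely supported in $\Im \varphi^*$. By Lemma \ref{lem:nu2M}, $\nu^{*,2} \in \Mc{g}$, so for every Borel $A \subset \Im g$ one has $\nu^{*,2}(A) \ge c\,\mathcal H^1(A)/\L(g)$. Taking $A = \Im g \setminus \Im \varphi^*$ (for which the left side vanishes) gives $\mathcal H^1(\Im g \setminus \Im \varphi^*) = 0$. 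This set is open in $\Im g$ since $\Im \varphi^*$ is closed, and any non-empty open subset of $\Im g$ has positive $\mathcal H^1$-measure: indeed, for any open interval $(a,b) \subset [0,1]$, Lemma \ref{lem:loc} together with the constant-speed hypothesis gives $\mathcal H^1(g((a,b))) = \L(g|_{(a,b)}) = (b-a)\L(g) > 0$. Hence $\Im g \setminus \Im \varphi^* = \emptyset$, i.e., $\Im g \subset \Im \varphi^*$. The chain
$$\L(g) \;=\; \mathcal H^1(\Im g) \;\le\; \mathcal H^1(\Im \varphi^*) \;\le\; \L(\varphi^*) \;\le\; \L(g),$$
whose middle inequality is $\mathcal H^1(\Im f) \le \L(f)$ for rectifiable $f$ (Lemma \ref{lem:dominH} applied to $\varphi^*$ combined with the Cauchy--Crofton formula) and whose rightmost inequality is the lower semi-continuity bound recalled just above the lemma, must then be saturated, yielding $\L(\varphi^*) = \L(g)$ and $\mathcal H^1(\Im \varphi^*) = \mathcal H^1(\Im g)$.

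The delicate remaining step is to promote this measure equality to the set equality $\Im \varphi^* = \Im g$. Suppose for contradiction that some $x \in \Im \varphi^* \setminus \Im g$; pick $r > 0$ with $\bar B(x,r) \cap \Im g = \emptyset$ (possible since $\Im g$ is closed) and $t_0 \in [0,1]$ with $\varphi^*(t_0) = x$. Let $[\alpha,\beta] \ni t_0$ be the maximal sub-interval of $[0,1]$ with $\varphi^*([\alpha,\beta]) \subset \bar B(x,r)$. If $[\alpha,\beta] = [0,1]$, then $\Im \varphi^* \subset \bar B(x,r)$ contradicts the non-emptiness of $\Im g \subset \Im \varphi^*$. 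Otherwise, say $\alpha > 0$; continuity and maximality then force $|\varphi^*(\alpha) - x| = r$, so the arc $\varphi^*([\alpha, t_0])$ is a compact connected subset of $\R^d$ of diameter at least $r$, and hence $\mathcal H^1(\varphi^*([\alpha, t_0])) \ge r$. But this arc lies in $\bar B(x,r) \subset \Im \varphi^* \setminus \Im g$, contradicting $\mathcal H^1(\Im \varphi^*) = \mathcal H^1(\Im g)$. The main obstacle is exactly this topological upgrade: measure equality does not by itself give set equality, and one must exploit that $\Im \varphi^*$ is a continuum of finite $1$-dimensional measure to convert the negligibility of $\Im \varphi^* \setminus \Im g$ into its emptiness.
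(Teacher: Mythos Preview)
Your proof is correct and follows essentially the same strategy as the paper's: both establish $\Im g\subset\Im\varphi^*$ from $\nu^*(Z=\varphi^*(T))=1$ together with $\nu^{*,2}\in\Mc{g}$, then use the chain $\L(g)=\mathcal H^1(\Im g)\le\mathcal H^1(\Im\varphi^*)\le\L(\varphi^*)\le\L(g)$ to get the length equality. The paper reaches $\Im g\subset\Im\varphi^*$ via ``$d(g(u),\Im\varphi^*)=0$ $du$-a.e., hence everywhere by continuity'', whereas you argue that the open set $\Im g\setminus\Im\varphi^*$ has $\mathcal H^1$-measure zero and is therefore empty; these are interchangeable variants. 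Your connectedness argument for the reverse inclusion $\Im\varphi^*\subset\Im g$ is a welcome elaboration of a step the paper simply asserts (``we have in fact equality''): you make explicit why $\mathcal H^1(\Im\varphi^*\setminus\Im g)=0$ forces emptiness of that set, via the standard fact that a continuum of diameter $\ge r$ has $\mathcal H^1\ge r$.

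One small notational slip: you write ``this arc lies in $\bar B(x,r)\subset\Im\varphi^*\setminus\Im g$'', but of course the ball itself is not contained in $\Im\varphi^*$. What you mean is that the arc is contained in $\bar B(x,r)\cap\Im\varphi^*\subset\Im\varphi^*\setminus\Im g$ (since $\bar B(x,r)\cap\Im g=\emptyset$); the logic is fine once this is rewritten.
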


\begin{proof}[Proof of Lemma \ref{lem:ImL=}]
	Since, by Lemma \ref{lem:delta*=0}, $\int |z-\varphi^*(t)|^2d\nu^*(t,z)=0$ almost surely,  $\nu^*(Z=\varphi^*(T))=1$ a.s., that is, by Lemma \ref{lem:mint},
	$\nu^{*}(d(Z,\Im\varphi^*)=0)=1$ a.s..
As $\nu^{*,2}\in\Mc{g}$,	$d(g(u),\Im\varphi^*)=0$ $du-$a.e., a.s.. By continuity, $d(g(t),\Im\varphi^*)=0$ for every $t\in[0,1]$, a.s..
	Thus,  $\Im g\subset \Im\varphi^*$ a.s..
	As $\L(\varphi^*)\leq \L(g)$ a.s., using the assumption $\L(g)=\mathcal H^1(\Im g)$, we have in fact equality $\Im g= \Im\varphi^*$ a.s., and $\L(\varphi^*)=\L(g)$ a.s..
\end{proof}

In particular, $\L(\varphi^*)=\mathcal H^1(\Im \varphi^*)$ a.s..
Moreover, since $f^*_n$ is $\L(g)$--Lipschitz for every $n\geq 1$, passing to the limit for $f^*_{\sigma(n)}$, we obtain that $\varphi^*$ is $\L(g)$--Lipschitz, that is $\L(\varphi^*)$--Lipschitz, a.s..
We deduce that $|{\varphi^*}'(t)|=\L(\varphi^*)$ $dt-$a.e., a.s..
So, $\varphi^*$ satisfies the assumptions of Lemma \ref{lem:UH}.
	
\begin{lem}\label{lem:cv1}
Assume that $\L(g)>0$. We have $$\D_{\sigma(n)}^*(\M)\to 0,$$
in probability.
\end{lem}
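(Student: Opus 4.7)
The plan is to work on the Skorokhod probability space on which $(f^*_{\sigma(n)},\nu^*_{\sigma(n)})\to(\varphi^*,\nu^*)$ almost surely, identify the first marginal $\nu^{*,1}$ of $\nu^*$ as a member of $\M$ almost surely, and conclude using that $\D$ metrizes weak convergence. Indeed, by construction the first marginal of $\nu^*_{\sigma(n)}$ is exactly the empirical measure $\frac1{\sigma(n)}\sum_{i=1}^{\sigma(n)}\delta_{T(f^*_{\sigma(n)},X_i^{\sigma(n)})}$, so its almost sure weak convergence to $\nu^{*,1}$ together with closedness of $\M$ will give $\D_{\sigma(n)}^*(\M)\to 0$ almost surely on the Skorokhod space, hence in probability in the original space.

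The key step I would carry out first is the identification $\nu^{*,2}=\varphi^*_\sharp\,\nu^{*,1}$, that is, the pushforward of $\nu^{*,1}$ by $\varphi^*$. By Lemma \ref{lem:delta*=0} we have $\int|z-\varphi^*(t)|^2\,d\nu^*(t,z)=0$ almost surely, so $z=\varphi^*(t)$ $\nu^*$-a.e.; integrating an arbitrary bounded continuous test function against $\nu^*$ then gives $\int h(z)\,d\nu^{*,2}(z)=\int h(\varphi^*(t))\,d\nu^{*,1}(t)$, which is precisely the claim. Measurability of $\varphi^*$ is not an issue since it is almost surely $\L(g)$-Lipschitz as a uniform limit of such functions.

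Next, I would apply the backward implication of Lemma \ref{lem:UH} to the curve $\varphi^*$. The remarks following Lemma \ref{lem:ImL=} show that $\varphi^*$ satisfies almost surely all the hypotheses of Lemma \ref{lem:UH}: $0<\L(\varphi^*)<\infty$, $|{\varphi^*}'(t)|=\L(\varphi^*)$ a.e., and $\H^1(\Im\varphi^*)=\L(\varphi^*)$. Moreover, since $\Im\varphi^*=\Im g$ and $\L(\varphi^*)=\L(g)$ almost surely, the classes $\Mc{\varphi^*}$ and $\Mc{g}$ coincide, and Lemma \ref{lem:nu2M} places $\nu^{*,2}$ in $\Mc{g}=\Mc{\varphi^*}$ almost surely. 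Combining this with the identity $\nu^{*,2}=\nu^{*,1}\circ(\varphi^*)^{-1}$, Lemma \ref{lem:UH} (reverse direction, applied to $\varphi^*$) yields $\nu^{*,1}\geq c\lambda$, i.e., $\nu^{*,1}\in\M$ almost surely.

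The anticipated main obstacle is precisely this transfer step: the constraint from Lemma \ref{lem:nu2M} lives on the second coordinate, and bringing it back to a statement about the first marginal is only possible once we have shown that $\nu^*$ is supported on the graph of $\varphi^*$; this is where Lemmas \ref{lem:mint} and \ref{lem:delta*=0} together with Lemma \ref{lem:ImL=} play their crucial role, and where the hypothesis $\L(g)>0$ becomes necessary to ensure that Lemma \ref{lem:UH} can be invoked. Once $\nu^{*,1}\in\M$ almost surely is established, the conclusion is immediate: on the Skorokhod space, the empirical first marginals converge weakly to an element of $\M$, so their $\D$-distance to $\M$ tends to $0$ almost surely, and the equality in distribution transports this to convergence in probability of $\D_{\sigma(n)}^*(\M)$ to $0$ in the original space.
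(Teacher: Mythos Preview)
Your proposal is correct and follows essentially the same route as the paper: establish $\nu^{*,2}=\nu^{*,1}\circ(\varphi^*)^{-1}$ from $\nu^*(Z=\varphi^*(T))=1$, use $\nu^{*,2}\in\Mc{g}=\Mc{\varphi^*}$ together with the backward implication of Lemma~\ref{lem:UH} (whose hypotheses on $\varphi^*$ are verified just after Lemma~\ref{lem:ImL=}) to get $\nu^{*,1}\in\M$ a.s., and conclude via weak convergence of the first marginals. Your write-up simply spells out a few steps that the paper leaves implicit.
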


\begin{proof}[Proof of Lemma \ref{lem:cv1}]

	The distribution $\nu^{*,1}\circ(\varphi^*)^{-1}$ is  $\nu^{*,2}$ a.s.. Since $\nu^{*,2}$ belongs to $\Mc{g}$ a.s., which is $\Mc{\varphi^*}$ a.s., Lemma \ref{lem:UH} shows that
	$\nu^{*,1}\in\M$ a.s..

	Hence, $$\D\left(\frac{1}{\sigma(n)}\sum_{i=1}^{\sigma(n)}\delta_{T(f^*_{\sigma(n)},X_i^{\sigma(n)})},\M\right)$$ tends to 0 in probability as $n$ tends to infinity.
\end{proof}

Finally, $(\D_{n}^*(\M))_{n\geq 1}$ converges in probability to 0, which proves Proposition \ref{prop:Dn->0}.

\subsection{Step 2}

Let $\hat{f}_n:=\hat{f}_{n,\hat L_n}$.  In the sequel, we will consider extractions of the sequence $(\hat{f}_n)_{n\geq 1}$ converging in distribution, and show that for every limit point $\varphi$ of   $(\hat{f}_n)_{n\geq 1}$, $d_H(\Im \varphi,\Im g)=0$ a.s..

By definition of $\hat{L}_n$, we have $$\hat L_n^2 \mathcal D\left(
\frac{1}{n}\sum_{i=1}^{n}\delta_{T(\hat f_n,X_i^n)} ,\M\right)+\Delta_n(\hat{f}_n)\leq \L^2(f_n^*)\mathcal D\left(\frac{1}{n}\sum_{i=1}^{n}\delta_{T(f^*_n,X_i^n)},\M\right)+\Delta_n(f_n^*).$$ By Lemma \ref{lem:Deltan->0} and Proposition \ref{prop:Dn->0}, the right-hand term tends to 0, which implies the two following results.

\begin{lem}\label{lem:Deltanhat->0}
$\Delta_n(\hat f_n)$ converges in probability to 0 as $n$ tends to infinity.
\end{lem}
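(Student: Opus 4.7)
The strategy is simply to exploit the minimality of $\hat{L}_n$ combined with the two convergence results already obtained in Step 1. The inequality displayed right before the statement,
\begin{equation*}
\hat L_n^2\, \mathcal D\!\left(\tfrac{1}{n}\sum_{i=1}^{n}\delta_{T(\hat f_n,X_i^n)},\M\right)+\Delta_n(\hat{f}_n)\leq \L^2(f_n^*)\,\mathcal D\!\left(\tfrac{1}{n}\sum_{i=1}^{n}\delta_{T(f^*_n,X_i^n)},\M\right)+\Delta_n(f_n^*),
\end{equation*}
already isolates $\Delta_n(\hat f_n)$ as one of two non-negative summands on the left-hand side. Dropping the first (non-negative) term gives $0\le \Delta_n(\hat f_n)\le$ right-hand side, so it suffices to control the right-hand side.

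The plan is as follows. First, I would note that $f_n^*=\hat f_{n,a_n\lfloor \L(g)/a_n\rfloor}$ is built with length constraint at most $a_n\lfloor \L(g)/a_n\rfloor\le \L(g)\le \Lambda$, and since $\hat f_{n,L}$ is chosen to satisfy $\L(\hat f_{n,L})\le L$, this yields the deterministic bound $\L^2(f_n^*)\le \Lambda^2$. Consequently
\begin{equation*}
\L^2(f_n^*)\,\mathcal D_n^*(\M)\le \Lambda^2\,\mathcal D_n^*(\M),
\end{equation*}
where $\mathcal D_n^*(\M)$ is the quantity introduced before Proposition \ref{prop:Dn->0}. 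By that proposition (applicable since $\L(g)>0$; if $\L(g)=0$, then $g$ is constant and the statement is trivial because $f^*_n$ may be chosen equal to $g$, giving $\Delta_n(f^*_n)\to 0$ directly), this upper bound tends to $0$ in probability. Second, Lemma \ref{lem:Deltan->0} gives $\Delta_n(f_n^*)\to 0$ in probability. Adding the two contributions, the right-hand side of the displayed inequality converges to $0$ in probability, and the sandwich with $0$ forces $\Delta_n(\hat f_n)\to 0$ in probability.

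There is no real obstacle here: the argument is just a direct consequence of the defining optimality property of $\hat L_n$ together with the results of Step 1, used through the deterministic upper bound $\L^2(f_n^*)\le \Lambda^2$ which removes the random prefactor in front of $\mathcal D_n^*(\M)$. The only minor point to verify along the way is that $\L(g)>0$ is the case of interest (the degenerate case being trivial), so that Proposition \ref{prop:Dn->0} applies.
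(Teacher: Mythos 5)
Your proposal is correct and follows essentially the same route as the paper: the authors likewise take the displayed optimality inequality for $\hat L_n$, observe that the right-hand side tends to $0$ in probability by Lemma \ref{lem:Deltan->0} and Proposition \ref{prop:Dn->0} (the prefactor $\L^2(f_n^*)$ being bounded by $\L(g)^2\le\Lambda^2$), and conclude by non-negativity of the two summands on the left. Your extra remark on the degenerate case $\L(g)=0$ is a harmless refinement, since there the first term on the right vanishes and $\Delta_n(f_n^*)\le\Delta_n(g)\to 0$ directly.
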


\begin{lem}\label{lem:Dnhat->0} We have
\begin{equation*}
\hat L_n^2 \mathcal D\left(\frac{1}{n}\sum_{i=1}^{n}\delta_{T(\hat f_n,X_i^n)} ,\M\right)\to 0,
\end{equation*}in probability.
\end{lem}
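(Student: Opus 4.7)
The plan is to derive Lemma \ref{lem:Dnhat->0} as an immediate consequence of the comparison inequality displayed just above its statement. The defining property of $\hat L_n$ as an argmin over the grid $a_n\N\cap[0,\Lambda_n\wedge\Lambda]$ implies that any admissible competitor provides an upper bound on the value of the criterion at $\hat L_n$. The natural candidate is $L_n^*:=a_n\lfloor \L(g)/a_n\rfloor$, which lies in $a_n\N$ and satisfies $L_n^*\le\L(g)\le\Lambda$; on the event where $L_n^*\le\Lambda_n$ (which has probability tending to $1$ under our hypotheses), it is admissible, and evaluating the criterion at $L_n^*$ yields the inequality
\[\hat L_n^2\,\mathcal D\!\left(\tfrac{1}{n}\sum_{i=1}^{n}\delta_{T(\hat f_n,X_i^n)},\M\right)+\Delta_n(\hat f_n)\le \L^2(f_n^*)\,\mathcal D\!\left(\tfrac{1}{n}\sum_{i=1}^{n}\delta_{T(f_n^*,X_i^n)},\M\right)+\Delta_n(f_n^*).\]

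Once this inequality is in place, I would show that the right-hand side converges to $0$ in probability. The factor $\L(f_n^*)$ is deterministically bounded by $\L(g)$, so Proposition \ref{prop:Dn->0}, combined with this uniform bound, forces the first summand to vanish in probability; the second summand vanishes by Lemma \ref{lem:Deltan->0}. Both summands on the left-hand side are non-negative, so each is individually dominated by the RHS and therefore tends to $0$ in probability. Applied to the first summand, this is exactly Lemma \ref{lem:Dnhat->0}, while the second summand yields Lemma \ref{lem:Deltanhat->0}.

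The only subtle point, and the main obstacle I foresee, is handling the admissibility condition $L_n^*\le\Lambda_n$. It is not a serious difficulty: one can restrict to the high-probability event where the inequality holds, or, alternatively, replace $L_n^*$ by the largest element of $a_n\N\cap[0,\Lambda_n\wedge\Lambda]$ not exceeding $\L(g)$. Since $a_n\to 0$, the latter choice still tends to $\L(g)$ as $n\to\infty$, so the corresponding empirical optimum $f_n^*$ retains the properties needed in the argument above ($\Delta_n(f_n^*)\to 0$ in probability by the same computation as in Lemma \ref{lem:Deltan->0}, and $\L(f_n^*)\le\L(g)$), and the proof goes through unchanged.
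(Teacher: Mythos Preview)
Your proposal is correct and follows the same route as the paper: use the defining minimality of $\hat L_n$ to compare with the competitor $L_n^*=a_n\lfloor\L(g)/a_n\rfloor$, then invoke Lemma \ref{lem:Deltan->0} and Proposition \ref{prop:Dn->0} to send the right-hand side to $0$, and conclude by nonnegativity of both left-hand terms. Your discussion of the admissibility condition $L_n^*\le\Lambda_n$ is in fact more careful than the paper, which simply writes ``By definition of $\hat L_n$'' without commenting on this point.
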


 We set $\nu_n=\frac 1 n \sum_{i=1}^{n}\delta_{\big(T(\hat f_n,X_i^n),X_i^n\big)}$ and $\nu'_n=\frac 1 n \sum_{i=1}^{n}\delta_{\big(T(\hat f_n,X_i^n),g(U_i^n)\big)}$.
 
 The same arguments as in the proof of Lemma \ref{lem:nun'nun} and Lemma \ref{lem:fn*nu'tight} lead to the following statements for $\nu_n$, $\nu'_n$, and $\hat f_n$. 

	\begin{lem}\label{lem:mun'mun}
	The difference $|\nu_n-\nu'_n|_{BL}$ converges in probability to 0 as $n$ tends to infinity.
\end{lem}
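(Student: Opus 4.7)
The statement mirrors Lemma \ref{lem:nun'nun} exactly, with $\hat f_n$ in place of $f_n^*$. Since nothing in the argument of Lemma \ref{lem:nun'nun} used the specific optimality of $f_n^*$ (the function $\hat f_n$ also goes into the first coordinate $T(\hat f_n, X_i^n)$ only, which then cancels when comparing $\nu_n$ and $\nu_n'$), the same reasoning applies verbatim. The plan is thus simply to rerun that proof.

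More concretely, I would fix a test function $h:[0,1]\times \R^d\to\R$ with $|h|_\infty\leq 1$ and $h$ $1$-Lipschitz, and note that since $T(\hat f_n, X_i^n)$ is the same in both measures, the bounded-Lipschitz duality reduces to
$$
|\nu_n-\nu_n'|_{BL}\leq \frac 1 n \sum_{i=1}^n \bigl|h(T(\hat f_n, X_i^n), X_i^n) - h(T(\hat f_n, X_i^n), g(U_i^n))\bigr|.
$$
Using that $h$ is $1$-Lipschitz in the second coordinate gives an upper bound of $\frac 1 n \sum_i |\e_i^n|$, while $|h|_\infty \leq 1$ gives an upper bound of $2$ per term; combining them yields
$$
|\nu_n-\nu_n'|_{BL}\leq \frac 1 n \sum_{i=1}^n \min(|\e_i^n|,2).
$$

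The key step is then to deduce convergence to $0$ in probability of the latter from the assumption that $\frac 1 n \sum_i V(|\e_i^n|)\to 0$ in probability. Given $\eta>0$, pick $\delta\in(0,\eta/2)$; by the continuity of $V$ at $0$ and strict monotonicity, $\alpha:=V(\delta)>0$. Splitting according to $\{|\e_i^n|\leq\delta\}$ and $\{|\e_i^n|>\delta\}$,
$$
\frac 1 n\sum_{i=1}^n \min(|\e_i^n|,2)\leq \delta + \frac{2}{n}\,\#\{i:|\e_i^n|>\delta\}\leq \delta + \frac{2}{\alpha}\cdot \frac 1 n \sum_{i=1}^n V(|\e_i^n|),
$$
which is $<\eta$ with probability tending to $1$. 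This is the only mildly non-routine step, and it is the place where the mild assumptions on $V$ (continuity at $0$, $V(0)=0$, strict monotonicity) are used to transfer the convergence hypothesis on $\frac 1 n\sum V(|\e_i^n|)$ to a convergence of truncated first moments.

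Nothing above depends on $\hat f_n$ except through the first coordinate of the Dirac masses, which is identical on both sides of the comparison, so no additional control on $\hat f_n$ (e.g.\ Lipschitz constant or tightness) is needed here. The statement follows.
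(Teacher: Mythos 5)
Your proof is correct, and at the decisive step it actually takes a different (and more robust) route than the paper. The paper disposes of this lemma by pointing back to the proof of Lemma \ref{lem:nun'nun}, which bounds the bounded-Lipschitz distance by $\frac 1n\sum_{i=1}^n|\e_i^n|\leq\bigl(\frac 1n\sum_{i=1}^n|\e_i^n|^2\bigr)^{1/2}$ and asserts that this tends to $0$ in probability; that last assertion is immediate when $V(x)=x^2$ but does not follow from the stated hypothesis $\frac 1n\sum_{i=1}^nV(|\e_i^n|)\to 0$ for a general $V$ (e.g.\ $V(x)=\frac{x}{1+x}$ is bounded, so heavy-tailed noise can make $\frac 1n\sum_i V(|\e_i^n|)$ small while $\frac 1n\sum_i|\e_i^n|$ diverges). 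You instead exploit $|h|_\infty\leq 1$ to truncate, obtaining the bound $\frac 1n\sum_{i=1}^n\min(|\e_i^n|,2)$, and then transfer the hypothesis on $\frac 1n\sum_i V(|\e_i^n|)$ via the Markov-type inequality $\#\{i:|\e_i^n|>\delta\}\leq V(\delta)^{-1}\sum_i V(|\e_i^n|)$, using only that $V$ is increasing with $V(\delta)>0$ for $\delta>0$. This is exactly the argument needed to make the lemma hold under the paper's actual assumptions, so your version both proves the statement and closes a small gap in the argument it mirrors; the only cosmetic point is that your first display should carry a supremum over the test functions $h$, which is harmless since the ensuing bound is uniform in $h$. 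Your observation that no tightness or Lipschitz control on $\hat f_n$ is needed, because $T(\hat f_n,X_i^n)$ appears identically in both empirical measures, matches the paper.
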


\begin{lem}\label{lem:fnmu'tight}
	The sequence $(\hat f_n,{\nu_n'})_{n\geq 1}$ is tight.
\end{lem}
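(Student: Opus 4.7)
The plan is to mirror the proof of Lemma~\ref{lem:fn*nu'tight} and establish tightness of each marginal separately. For the measure component, $\nu_n'$ is supported on the compact product $[0,1]\times\Im g$ for every $n$. Since the set of probability measures on a fixed compact set is itself compact for the weak topology, the sequence $(\nu_n')_{n\geq 1}$ is automatically tight.

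For $(\hat f_n)_{n\geq 1}$ in $C([0,1])$, each $\hat f_n=\hat f_{n,\hat L_n}$ is $\hat L_n$-Lipschitz with $\hat L_n\leq\Lambda$, so the family is uniformly equicontinuous. By \cite[Lemma 2.1]{Prokh} it is then enough to check that $(\hat f_n(0))_{n\geq 1}$ is tight in $\R^d$. I would argue by contradiction: if not, there is $\eta>0$ such that for every $A>0$ some $n$ satisfies $P(|\hat f_n(0)|>A)\geq\eta$, which by the Lipschitz bound yields $P(d(0,\Im\hat f_n)>A-\Lambda)\geq\eta$.

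To reach a contradiction, I would combine the quasi-subadditivity of $V$ with Lemma~\ref{lem:Deltanhat->0}. For every $i$, writing $t_i=T(\hat f_n,X_i^n)$, one has
\[
V(d(0,\Im\hat f_n))\leq V(|\hat f_n(t_i)|)\leq C\bigl(V(d(X_i^n,\Im\hat f_n))+V(|X_i^n|)\bigr),
\]
so averaging over $i$ gives
\[
\frac{V(d(0,\Im\hat f_n))}{C}\leq \Delta_n(\hat f_n)+\frac{1}{n}\sum_{i=1}^{n}V(|X_i^n|).
\]
The right-hand side is tight: $\Delta_n(\hat f_n)\to 0$ in probability by Lemma~\ref{lem:Deltanhat->0}, and $\frac{1}{n}\sum V(|X_i^n|)$ is tight by the quasi-subadditivity of $V$ together with the boundedness of $V\circ|g|$ on $[0,1]$ and the hypothesis $\frac{1}{n}\sum V(|\e_i^n|)\to 0$. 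Fixing $\e<\eta$, choosing $M_\e$ so that the right-hand side exceeds $M_\e$ with probability at most $\e$ uniformly in $n$, and finally taking $A$ large enough for $V(A-\Lambda)>2CM_\e$, contradicts the lower bound $P(d(0,\Im\hat f_n)>A-\Lambda)\geq\eta$. This yields tightness of $(\hat f_n(0))$, and combined with the first paragraph, tightness of $(\hat f_n,\nu_n')_{n\geq 1}$.

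The only delicate step is the derivation of the inequality $V(d(0,\Im\hat f_n))/C\leq \Delta_n(\hat f_n)+\frac{1}{n}\sum V(|X_i^n|)$, where the quasi-subadditivity hypothesis on $V$ has to be invoked at the minimizing $t_i$; the rest is a routine transcription of Lemma~\ref{lem:fn*nu'tight}, with $f^*_n$ replaced by $\hat f_n$ and the uniform Lipschitz bound $\L(g)$ replaced by $\Lambda$.
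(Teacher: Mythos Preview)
Your proposal is correct and follows essentially the same route as the paper. The paper's own proof simply says to replace $f^*_n$ by $\hat f_n$ and the Lipschitz bound $\L(g)$ by $\Lambda$ in the argument of Lemma~\ref{lem:fn*nu'tight}; your write-up does precisely this, deriving the same key inequality $V(d(0,\Im\hat f_n))/C\le \Delta_n(\hat f_n)+\frac1n\sum_i V(|X_i^n|)$ that appears (in rearranged form) in that proof. The only cosmetic difference is that you appeal to Lemma~\ref{lem:Deltanhat->0} to control $\Delta_n(\hat f_n)$, whereas a literal transcription would use the optimality bound $\Delta_n(\hat f_n)\le\Delta_n(0)$; both give tightness of $\Delta_n(\hat f_n)$ and the contradiction closes identically.
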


To obtain this result, it suffices to notice that  $\hat{f}_n$ is $\hat{L}_n-$Lipschitz for every $n\geq 1$, and, thus, $\Lambda-$Lipschitz, and to show that the sequence $(\hat f_n(0))_{n\geq 1}$ is tight, by replacing $\L(g)$ by $\Lambda$ in the proof of Lemma \ref{lem:fn*nu'tight}.

By  Lemma \ref{lem:fnmu'tight}, the sequence $(d_H(\Im \hat{f}_n,\Im g))_{n\geq 1}$ is tight.  Considering an arbitrary limit point for weak convergence, we let $\kappa:\N\to\N$ denote a strictly increasing function such that the subsequence $(d_H(\Im \hat{f}_{\kappa(n)},\Im g))_{n\geq 1}$ is weakly convergent. We will show that the considered limit point is $\delta_0$. By Lemma  \ref{lem:mun'mun} and Lemma \ref{lem:fnmu'tight}, the sequence $(\hat f_n,\nu_n)_{n\geq 1}$ is tight. Thus, by Prohorov's theorem, up to an extraction, we may assume that $(\hat f_{\kappa(n)},\nu_{\kappa(n)})_{n\geq 1}$ converges in distribution to a tuple $(\varphi,\nu)$, where $\varphi$ is a random function and $\nu$ a random probability measure on $[0,1]\times \R^d$. Thanks to Skorokhod's representation theorem, 
there exist a random sequence  $(\tilde{\hat f}_{\kappa(n)}, \tilde{\nu}_{\kappa(n)})_{n\geq 1}$ and a tuple $(\tilde{\varphi}, \tilde{\nu})$ with the same distribution as  $(\hat f_{\kappa(n)},\nu_{\kappa(n)})_{n\geq 1}$ and  $(\varphi,\nu)$ respectively, such that $(\tilde{\hat f}_{\kappa(n)}, \tilde{\nu}_{\kappa(n)})_{n\geq 1}$ converges almost surely to $(\tilde{\varphi}, \tilde{\nu})$.
Again, the tilde will be omitted to lighten notation, and the marginal distributions of $\nu$ will be denoted by $\nu^{i}$, $i=1,2$. 
 As above, $\nu^2\in\Mc{g}$.
  Moreover, we have the next result, similar to  Lemma \ref{lem:mint}.
\begin{lem}
	  Denoting by $(T,Z)$ the identity  on $[0,1]\times \R^d$, we have \begin{equation}
\nu\big(|Z-\varphi(T)|=\min_{t\in[0,1]}|Z-\varphi(t)|\big)=1 \mbox{ a.s.}\label{lem:mint2}
\end{equation}
\end{lem}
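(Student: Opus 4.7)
The plan is to transport, almost verbatim, the proof of Lemma \ref{lem:mint}, with the subsequence $(\hat f_{\kappa(n)}, \nu_{\kappa(n)})_{n\geq 1}$ replacing $(f^*_{\sigma(n)}, \nu^*_{\sigma(n)})_{n\geq 1}$, and the limit $(\varphi, \nu)$ replacing $(\varphi^*, \nu^*)$. Only two ingredients are needed: the definition $T(\hat f_n, x) = \max\argmin_{t\in[0,1]} |x - \hat f_n(t)|$, which forces $\nu_n$ to be supported on minimum-distance pairs for $\hat f_n$, and the almost-sure uniform convergence $\hat f_{\kappa(n)} \to \varphi$ provided by the Skorokhod coupling introduced just before the statement.

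Concretely, for $\e>0$ and any continuous curve $f:[0,1]\to\R^d$, I would reintroduce the open set
$$\mathcal O^\e_f = \Big\{(t,z)\in [0,1]\times\R^d : |z-f(t)| > \min_{s\in[0,1]}|z-f(s)| + \e\Big\},$$
which is open because both $(t,z)\mapsto|z-f(t)|$ and $(t,z)\mapsto\min_s|z-f(s)|$ are continuous. By the very definition of $T(\hat f_n,\cdot)$, every atom $(T(\hat f_n, X_i^n), X_i^n)$ of $\nu_n$ satisfies $|X_i^n - \hat f_n(T(\hat f_n, X_i^n))| = \min_s |X_i^n - \hat f_n(s)|$, hence $\nu_n(\mathcal O^\delta_{\hat f_n}) = 0$ for every $\delta>0$ and every $n\geq 1$.

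Next I would combine the almost-sure weak convergence $\nu_{\kappa(n)}\to\nu$ with a uniform estimate coming from uniform convergence of the curves. A double application of the triangle inequality yields, for any two continuous curves $f_1, f_2$,
$$\Big|\,|z-f_1(t)| - \min_s|z-f_1(s)|\,\Big| - \Big|\,|z-f_2(t)| - \min_s|z-f_2(s)|\,\Big| \leq 2\|f_1 - f_2\|_\infty,$$
so almost sure uniform convergence of $\hat f_{\kappa(n)}$ to $\varphi$ ensures that for every fixed $\e>0$, $\mathcal O^\e_\varphi \subset \mathcal O^{\e/2}_{\hat f_{\kappa(n)}}$ for all $n$ large enough, almost surely. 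Portmanteau then gives
$$\nu(\mathcal O^\e_\varphi) \leq \liminf_n \nu_{\kappa(n)}(\mathcal O^\e_\varphi) \leq \liminf_n \nu_{\kappa(n)}\bigl(\mathcal O^{\e/2}_{\hat f_{\kappa(n)}}\bigr) = 0 \quad \text{a.s.}$$

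Finally, applying this to $\e = 1/k$ for each $k\geq 1$ and taking the countable union of exceptional sets, one concludes
$$\nu\bigl(|Z - \varphi(T)| > \min_{t\in[0,1]}|Z - \varphi(t)|\bigr) = 0 \quad \text{a.s.},$$
which, together with the pointwise reverse inequality, is the stated equality. The only subtlety is the need to handle the $\e$-dependence of the null set when invoking almost sure uniform convergence; this is precisely why $\e$ is restricted to the countable family $\{1/k\}_{k\geq 1}$ before letting $k\to\infty$. No genuine new difficulty arises compared to Lemma \ref{lem:mint}.
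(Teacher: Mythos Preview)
Your proposal is correct and follows exactly the approach the paper intends: the paper does not give a separate proof for this lemma, merely flagging it as ``similar to Lemma~\ref{lem:mint}'', and your write-up transports that argument with the appropriate substitutions $(\hat f_{\kappa(n)},\nu_{\kappa(n)},\varphi,\nu)$ for $(f^*_{\sigma(n)},\nu^*_{\sigma(n)},\varphi^*,\nu^*)$. If anything, you are more careful than the original proof of Lemma~\ref{lem:mint}, making explicit the $2\|f_1-f_2\|_\infty$ estimate behind the inclusion $\mathcal O^\e_\varphi\subset\mathcal O^{\e/2}_{\hat f_{\kappa(n)}}$ and restricting to a countable family $\e=1/k$ to control the exceptional null sets.
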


The next lemma is obtained by mimicking the proof of Lemma \ref{lem:delta*=0}. 
\begin{lem}\label{lem:delta=0}
	We have $\int |z-\varphi(t)|^2d\nu(t,z)=0$ almost surely.
\end{lem}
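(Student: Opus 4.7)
The proof mirrors that of Lemma \ref{lem:delta*=0} almost verbatim, substituting $\hat f_n$ for $f^*_n$, $\varphi$ for $\varphi^*$, $\nu_n$ for $\nu^*_n$, and invoking Lemma \ref{lem:Deltanhat->0} in the role of Lemma \ref{lem:Deltan->0}. As a preliminary reduction, since $V$ is strictly increasing with $V(0)=0$, it suffices to prove the equivalent statement $\int V(|z-\varphi(t)|)\,d\nu(t,z)=0$ almost surely: this yields $z=\varphi(t)$ for $\nu$-almost every $(t,z)$, hence $\int |z-\varphi(t)|^2 d\nu(t,z)=0$ a.s.

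First, I would introduce $\Delta'_n(f)=\frac{1}{n}\sum_{i=1}^n V(d(g(U_i^n),\Im f))$. Using the assumption $V(x+y)\le C(V(x)+V(y))$ together with $d(g(U_i^n),\Im f)\le d(X_i^n,\Im f)+|\e_i^n|$, one obtains
$$\Delta'_n(\hat f_n)\le C\Delta_n(\hat f_n)+\frac{C}{n}\sum_{i=1}^n V(|\e_i^n|),$$
and both summands converge in probability to $0$, by Lemma \ref{lem:Deltanhat->0} and the hypothesis on the noise. Next, for each $i$, inserting $\pm\hat f_n(T(\hat f_n,X_i^n))$ and using the same subadditivity of $V$ gives
$$\frac{1}{n}\sum_{i=1}^{n}V\bigl(|g(U_i^{n})-\varphi(T(\hat f_n,X_i^{n}))|\bigr)\le C\Delta'_n(\hat f_n)+\frac{C}{n}\sum_{i=1}^n V\bigl(|\hat f_n(T(\hat f_n,X_i^n))-\varphi(T(\hat f_n,X_i^n))|\bigr).$$
Along the extracted subsequence $\kappa(n)$, the first term tends to $0$ in probability by what precedes, while the second tends to $0$ because $\hat f_{\kappa(n)}\to\varphi$ uniformly almost surely (after Skorokhod representation) and $V$ is continuous at $0$ with $V(0)=0$.

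It remains to pass to the limit. The function $\psi:(t,z)\mapsto V(|z-\varphi(t)|)$ is lower semi-continuous and nonnegative on $[0,1]\times\R^d$. Since $\nu_{\kappa(n)}\to \nu$ weakly a.s.\ and $|\nu_n-\nu'_n|_{BL}\to 0$ in probability by Lemma \ref{lem:mun'mun}, the measures $\nu'_{\kappa(n)}$ also converge weakly to $\nu$ (along a further subsequence if needed). The Portmanteau theorem for nonnegative lsc functions therefore yields
$$\int V(|z-\varphi(t)|)\,d\nu(t,z)\le \liminf_{n\to\infty}\frac{1}{\kappa(n)}\sum_{i=1}^{\kappa(n)}V\bigl(|g(U_i^{\kappa(n)})-\varphi(T(\hat f_{\kappa(n)},X_i^{\kappa(n)}))|\bigr)=0,$$
almost surely, which gives the claim.

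There is no real obstacle: the argument is a direct transcription of the proof of Lemma \ref{lem:delta*=0}. The only point requiring minor care is ensuring that the weak convergence of $\nu_{\kappa(n)}$ to $\nu$ passes to $\nu'_{\kappa(n)}$; this is exactly why Lemma \ref{lem:mun'mun} was stated, and the bounded-Lipschitz metric controls weak convergence on the compact component $[0,1]$ together with the $\R^d$-valued component (where the relevant test functions can be truncated without loss since $\psi\ge 0$ and Portmanteau for nonnegative lsc does not require boundedness).
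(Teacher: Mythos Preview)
Your proof is correct and follows exactly the approach the paper intends: the paper itself states only that the lemma ``is obtained by mimicking the proof of Lemma \ref{lem:delta*=0}'', and your write-up carries out precisely that transcription with the appropriate substitutions ($\hat f_n$, $\varphi$, $\nu$, Lemma \ref{lem:Deltanhat->0}, Lemma \ref{lem:mun'mun}). Your added remarks---the preliminary reduction from $|z-\varphi(t)|^2$ to $V(|z-\varphi(t)|)$ and the explicit passage from $\nu_{\kappa(n)}$ to $\nu'_{\kappa(n)}$ via Lemma \ref{lem:mun'mun} before applying Portmanteau---make explicit two points that are left implicit in the proof of Lemma \ref{lem:delta*=0}.
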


	\begin{lem}\label{lem:Img=Imphi}
		We have  $\Im g=  \Im\varphi$ almost surely.
	\end{lem}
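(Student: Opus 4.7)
The proof will establish the two inclusions $\Im g \subset \Im \varphi$ and $\Im \varphi \subset \Im g$ separately, the first one following the template of Lemma \ref{lem:ImL=} and the second requiring an additional argument to exploit the selection rule for $\hat L_n$.

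For $\Im g \subset \Im \varphi$, I would mirror the argument of Lemma \ref{lem:ImL=}. By Lemma \ref{lem:delta=0} together with the minimizing property of Lemma \ref{lem:mint2}, one has $\nu(Z=\varphi(T))=1$ a.s., so $\nu^2$ is supported in the closed set $\Im \varphi$, i.e.\ $\nu^2(\Im \varphi)=1$ a.s. Since $\nu^2 \in \Mc{g}$ yields $\nu^2 \geq (c/\L(g))\,\mathcal H^1|_{\Im g}$, this forces $\mathcal H^1(\Im g \setminus \Im \varphi)=0$. If there were $t_0 \in [0,1]$ with $g(t_0) \notin \Im \varphi$, continuity of $g$ and openness of $(\Im \varphi)^c$ would provide an open interval $(a,b) \ni t_0$ with $g((a,b)) \subset (\Im \varphi)^c$; the localized equality $\mathcal H^1(g((a,b))) = \L(g|_{(a,b)}) > 0$, a consequence of Lemma \ref{lem:loc} and $|g'|=\L(g)$ a.e., would contradict $\mathcal H^1(\Im g \setminus \Im \varphi)=0$. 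Hence $\Im g \subset \Im \varphi$ a.s.

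For the reverse inclusion the plan is to upgrade the convergence $\hat L_n^2\, \mathcal D(\nu_n^1, \M) \to 0$ in probability (Lemma \ref{lem:Dnhat->0}) to $\mathcal D(\nu_n^1, \M) \to 0$, which requires $\hat L_n$ to stay bounded below. I would enlarge the extraction underlying the Skorokhod representation so that $\hat L_{\kappa(n)} \to L_\infty \in [0,\Lambda]$ a.s.\ and $\hat L_{\kappa(n)}^2\, \mathcal D(\nu_{\kappa(n)}^1, \M) \to 0$ a.s.\ (the latter by passing to a further subsequence, using that convergence in probability implies a.s.\ convergence along some subsequence). The inclusion just proved now feeds back in: $\L(\varphi) \geq \mathcal H^1(\Im \varphi) \geq \mathcal H^1(\Im g) = \L(g) > 0$, and by lower semi-continuity of the length for $\hat f_{\kappa(n)} \to \varphi$, together with $\L(\hat f_{\kappa(n)}) \leq \hat L_{\kappa(n)}$, one obtains $L_\infty \geq \L(\varphi) \geq \L(g) > 0$. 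Dividing $\hat L_{\kappa(n)}^2\, \mathcal D(\nu_{\kappa(n)}^1,\M) \to 0$ by $\hat L_{\kappa(n)}^2 \to L_\infty^2 > 0$ gives $\mathcal D(\nu_{\kappa(n)}^1,\M) \to 0$ a.s., and since $\nu_{\kappa(n)}^1 \to \nu^1$ weakly and $\M$ is closed, I conclude $\nu^1 \in \M$ a.s.

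From $\nu^1 \in \M$ we have $\nu^1 \geq c\lambda$. Combined with $\nu^2 = \nu^1 \circ \varphi^{-1}$ (a consequence of $\nu(Z=\varphi(T))=1$) and $\nu^2 \in \Mc{g}$, which forces $\nu^2(\Im g)=1$, this yields $\nu^1(\varphi^{-1}(\Im g)^c) = 0$, hence $\lambda(\varphi^{-1}(\Im g)^c) = 0$ because $\nu^1 \geq c\lambda$ with $c>0$. Thus $\varphi^{-1}(\Im g)$ has full Lebesgue measure, in particular is dense in $[0,1]$, and continuity of $\varphi$ together with closedness of $\Im g$ gives $\varphi([0,1]) \subset \Im g$, i.e.\ $\Im \varphi \subset \Im g$ a.s.

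The main obstacle, which breaks the direct analogy with Lemma \ref{lem:ImL=}, is that the length of $\varphi$ need not be bounded above by $\L(g)$ (as the authors note, the true length is not recovered in general), so the second inclusion cannot be obtained by a simple comparison of $\mathcal H^1$ measures. The crucial trick is the bootstrapping indicated above: the already-established inclusion $\Im g \subset \Im \varphi$, which requires no length control, is precisely what rules out $L_\infty = 0$ and hence licenses the division that converts $\hat L_{\kappa(n)}^2\, \mathcal D \to 0$ into $\mathcal D \to 0$.
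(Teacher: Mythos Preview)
Your proof is correct and follows essentially the same approach as the paper: obtain $\Im g\subset\Im\varphi$ from $\nu(Z=\varphi(T))=1$ together with $\nu^2\in\Mc{g}$, then use this inclusion to force $\hat L_{\kappa(n)}$ to stay bounded away from zero so that Lemma~\ref{lem:Dnhat->0} upgrades to $\nu^1\in\M$, and finally deduce $\Im\varphi\subset\Im g$ from $\nu^1\ge c\lambda$, $\nu(Z=\varphi(T))=1$ and $\nu^2(\Im g)=1$ via a density--continuity argument. Your organization is in fact slightly tidier than the paper's: rather than splitting into the cases $\L(\varphi)=0$ and $\L(\varphi)>0$, you observe directly that $\Im g\subset\Im\varphi$ already forces $\L(\varphi)\ge\mathcal H^1(\Im\varphi)\ge\mathcal H^1(\Im g)=\L(g)>0$, making the degenerate case vacuous; and you handle the lower bound on $\hat L_{\kappa(n)}$ by an explicit further extraction to $L_\infty$, whereas the paper appeals more informally to $\liminf\hat L_n\ge\L(\varphi)$.
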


  \begin{proof}[Proof of Lemma \ref{lem:Img=Imphi}]
 	Since, by Lemma \ref{lem:delta=0}, $\nu( Z=\varphi(T))=1$ almost surely, using Lemma \ref{lem:mint2} and the fact that $\nu^2\in\Mc{g}$, we get, as in the proof of Lemma \ref{lem:ImL=},   $\Im g\subset \Im \varphi$ a.s..
	On the event $\{\L(\varphi)=0\}$, we get $\Im g=\Im \varphi$ a.s..
From now on, we consider the event  $\{\L(\varphi)>0\cap \Im g\neq  \Im\varphi\}$. It suffices to prove that it has probability zero. Yet, on this event,  $\liminf \hat L_n \ge \L(\varphi)>0$ a.s., therefore Lemma \ref{lem:Dnhat->0} yields the convergence in probability
	$$\mathcal D\left(\frac{1}{n}\sum_{i=1}^{n}\delta_{T(\hat f_n,X_i^n)} ,\M\right)\to 0,$$ which implies $\nu^1\in\M$ a.s..
	Moreover, there exists $t\in[0,1]$ such that $\varphi(t)\notin \Im g$, that is $d(\varphi(t),\Im g)>0$. By continuity,  $d(\varphi(s),\Im g)>0$ for every $s$ in a non-empty open interval $(a,b)$.
	We get $\nu(\varphi(T) \notin\Im g)\ge \nu( T\in(a,b)) \ge c(b-a)$ since $\nu^1\in\M$ a.s..
	Besides, $\nu( Z=\varphi(T))=1$  and $\nu(Z\in\Im g)=1$ a.s., leading to a contradiction whenever the event $\{\L(\varphi)>0\cap \Im g\neq  \Im\varphi\}$ has positive probability.
	So, $\Im g=  \Im\varphi$ a.s..
 \end{proof}

Hence, $d_H(\Im \varphi,\Im g)=0$ a.s., as desired. In other words, every limit point of the sequence $(d_H(\Im \hat{f}_n,\Im g))_{n\ge 1}$ is $\delta_0$, and, thus, Theorem \ref{theo:sig-unknown} is proved.

	\appendix
\section{Appendix}	

\label{section:app}

\subsection{Proof of Lemma \ref{lem:exist}}
\label{section:app-ex}

First, we show that  $\Delta$ is lower semi-continuous. Let $(f_n)_{n\geq 1}$ be a sequence of curves converging uniformly to a curve $f$. Then, for every $x\in\R^d$, $d(x,\Im f_n)$ converges to $d(x,\Im f)$, since $|d(x,\Im f_n)- d(x,\Im f)|\leq \sup_t |f(t)-f_n(t)|$. Then, by lower semi-continuity of $V$, for every $x\in\R^d$, \begin{equation}\label{eq:Vlsc}
	\liminf_{n\to \infty} V(d(x,\Im f_n))\geq V(d(x,\Im f)).
	\end{equation}
	By Fatou's Lemma, $$\liminf_{n\to \infty}\Delta(f_n)=\liminf_{n\to \infty} E[V(d(x,\Im f_n))]\geq E\left[ \liminf_{n\to \infty}V(d(x,\Im f_n))\right].$$ Using \eqref{eq:Vlsc}, we obtain $$\liminf_{n\to \infty}\Delta(f_n)\geq
	E[ V(d(x,\Im f))]=\Delta(f).$$ So, $\Delta$ is lower semi-continuous.
	
	Now, we prove that a minimizing sequence of $\Delta$ is relatively compact.
	Let $(f_n)_{n\geq 1}$ denote a  sequence  of curves with length at most $L$ and constant speed,  which is a minimizing sequence, that is $$\lim_{n\to\infty}\Delta(f_n)=\inf_{f,\L(f)\leq L}\Delta(f).$$
	
	For every $n\geq 1$, $f_n$ is $L$-Lipschitz. Thus, the sequence is equi-uniformly continuous. Let us show  that $(f_n(0))_{n\geq 1}$ is bounded. We may write, for every $n\geq 1$, $t\in[0,1]$, $|f_n(t)| \ge |f_n(0)| - L t\ge |f_n(0)| - L$. Here, the length $L$ is finite. 	
	Thus, if there exists a strictly increasing function $\kappa:\N\to \N$ such that $|f_{\kappa(n)}(0)| \to\infty$, one has $\Delta(f_{\kappa(n)})\to\sup V$, which is impossible since $$\inf_{f,\L(f)\leq L}\Delta(f)\le E[V(|X|)]<\sup V.$$ So, the sequence $(f_k(0))_{n\geq 1}$ is bounded.

\subsection{Proof of Lemma \ref{lem:mesurcard}}\label{section:app-mes}

	\begin{enumerate}
		\item Let us first assume that $A$ is compact.
		For $k\geq 1$, we set
		$$N_{\theta,r}^k(A)=\inf_{\in \mathcal C}\sum_{S\in \mathcal C}\I_{\{D_{\theta,r}\cap S=\emptyset\}},$$ where the infimum is taken over all finite coverings 
		$\mathcal C$ of the compact set $A$ with open sets $S$ such that $\mbox{diam}(S)\leq \frac{1}{k}$. For every open set $S$, the function $(\theta,r)\mapsto \I_{\{D_{\theta,r}\cap S=\emptyset\}}$ is upper semi-continuous since $\{(\theta,r), D_{\theta,r}\cap S\neq\emptyset\}$
 is open. Thus, $(\theta,r)\mapsto N_{\theta,r}^k(A)$ is also upper semi-continuous. Moreover, $$N_{\theta,r}(A)=\lim_{k\to \infty}\uparrow N_{\theta,r}^k(A).$$ Consequently, $ (\theta,r)\mapsto N_{\theta,r}(A)$ is measurable.
 		\item According to the Cauchy formula, 
 		\begin{align*}
 		\L(f)&=\frac1{c_d}\int_{\mathcal S^{d-1}} \int_0^\infty \mbox{Card}(\{t\in[0,1],f(t)\in D_{\theta,r}\}) dr d\theta\\&=
 		\frac1{c_d}\int_{\mathcal S^{d-1}} \int_0^\infty 
 		\sum_{y\in \Im f \cap D_{\theta,r}}\mbox{Card}(f^{-1}(\{y\}))drd\theta.
 		\end{align*}
 		Since $\L(f)<\infty$, the set $B=\{ (\theta,r), N_{\theta,r}(\Im f)=\infty\}$ is $d\theta dr$ negligible.
 		Moreover, $\mathcal M=\{A\in \mathcal B(\R^d)\cap \Im f, (\theta,r)\mapsto N_{\theta,r}(A)\I_{B^c}(\theta,r)\mbox{ is measurable}\}$ is a monotone class. Hence, $\mathcal M= \mathcal B(\R^d)\cap \Im f$.
 		Thus, $(\theta,r)\mapsto N_{\theta,r}(A)$ is measurable for the Lebesgue sigma-algebra for every $A\subset  \mathcal B(\R^d)\cap \Im f$.
	\end{enumerate}

\bibliography{courbiblio}

\begin{thebibliography}{20}
\providecommand{\natexlab}[1]{#1}
\providecommand{\url}[1]{\texttt{#1}}
\expandafter\ifx\csname urlstyle\endcsname\relax
  \providecommand{\doi}[1]{doi: #1}\else
  \providecommand{\doi}{doi: \begingroup \urlstyle{rm}\Url}\fi

\bibitem[Aamari and Levrard(2018)]{aamari2018}
E.~Aamari and C.~Levrard.
\newblock {Stability and minimax optimality of tangential Delaunay complexes
  for manifold reconstruction}.
\newblock \emph{Discrete \& Computational Geometry}, 59\penalty0 (4):\penalty0
  923--971, 2018.

\bibitem[Aamari and Levrard(2019)]{aamari2019}
E.~Aamari and C.~Levrard.
\newblock Nonasymptotic rates for manifold, tangent space and curvature
  estimation.
\newblock \emph{The Annals of Statistics}, 47\penalty0 (1):\penalty0 177--204,
  2019.

\bibitem[Alexandrov and Reshetnyak(1989)]{AlexRes}
A.~D. Alexandrov and Y.~G. Reshetnyak.
\newblock \emph{{General Theory of Irregular Curves}}.
\newblock Mathematics and its Applications. Kluwer Academic Publishers,
  Dordrecht, 1989.

\bibitem[Ayari and Dubuc(1997)]{ayari}
S.~Ayari and S.~Dubuc.
\newblock {La formule de Cauchy sur la longueur d'une courbe}.
\newblock \emph{Canadian Mathematical Bulletin}, 40:\penalty0 3--9, 1997.

\bibitem[Banach(1925)]{Banach}
S.~Banach.
\newblock Sur les lignes rectifiables et les surfaces dont l'aire est finie.
\newblock \emph{Fundamenta Mathematicae}, 7:\penalty0 225--237, 1925.

\bibitem[Buttazzo and Stepanov(2003)]{BS03}
G.~Buttazzo and E.~Stepanov.
\newblock {Optimal transportation networks as free Dirichlet regions for the
  Monge-Kantorovich problem}.
\newblock \emph{Ann. Sc. Norm. Sup. Pisa Cl. Sci.}, II\penalty0 (4):\penalty0
  631--678, 2003.

\bibitem[Buttazzo et~al.(2002)Buttazzo, Oudet, and Stepanov]{BOS}
G.~Buttazzo, E.~Oudet, and E.~Stepanov.
\newblock {Optimal transportation problems with free Dirichlet regions}.
\newblock \emph{Progress in Nonlinear Diff. Equations and their Applications},
  51:\penalty0 41--65, 2002.

\bibitem[Cauchy(1850)]{Cauchy}
A.~Cauchy.
\newblock M\'emoire sur la rectification des courbes et la quadrature des
  surfaces courbes.
\newblock 1850.

\bibitem[Crofton(1868)]{Crofton}
M.~W. Crofton.
\newblock On the theory of local probability, applied to straight lines at
  random in a plane.
\newblock \emph{Philosophical Transactions of the Royal Society}, 158:\penalty0
  181--199, 1868.

\bibitem[Duchamp and Stuetzle(1996{\natexlab{a}})]{DSextr96}
T.~Duchamp and W.~Stuetzle.
\newblock {Extremal properties of principal curves in the plane}.
\newblock \emph{The Annals of Statistics}, 24:\penalty0 1511--1520,
  1996{\natexlab{a}}.

\bibitem[Duchamp and Stuetzle(1996{\natexlab{b}})]{DSgeo96}
T.~Duchamp and W.~Stuetzle.
\newblock {Geometric properties of principal curves in the plane}.
\newblock In H.~Rieder, editor, \emph{Robust Statistics, Data Analysis, and
  Computer Intensive Methods: in Honor of Peter Huber's 60th Birthday}, volume
  109 of \emph{Lecture Notes in Statistics}, pages 135--152. Springer-Verlag,
  New York, 1996{\natexlab{b}}.

\bibitem[Falconer(1985)]{Falc}
J.~Falconer, K.
\newblock \emph{The Geometry of Fractal Sets}.
\newblock Cambridge tracts in Mathematics. Cambridge University Press, 1985.

\bibitem[Federer(1969)]{Fed}
H.~Federer.
\newblock \emph{Geometric measure theory}, volume 153 of \emph{Die Grundlehren
  der mathematischen Wissenschaften}.
\newblock Springer-Verlag, 1969.

\bibitem[Genovese et~al.(2012{\natexlab{a}})Genovese, Perone-Pacifico,
  Verdinelli, and Wasserman]{GPVW}
C.~R. Genovese, M.~Perone-Pacifico, I.~Verdinelli, and L.~Wasserman.
\newblock The geometry of nonparametric filament estimation.
\newblock \emph{Journal of the American Statistical Association}, 107:\penalty0
  788--799, 2012{\natexlab{a}}.

\bibitem[Genovese et~al.(2012{\natexlab{b}})Genovese, Perone-Pacifico,
  Verdinelli, and Wasserman]{GPVW1}
C.~R. Genovese, M.~Perone-Pacifico, I.~Verdinelli, and L.~Wasserman.
\newblock Minimax manifold estimation.
\newblock \emph{Journal of Machine Learning Research}, 13:\penalty0 1263--1291,
  2012{\natexlab{b}}.

\bibitem[Genovese et~al.(2012{\natexlab{c}})Genovese, Perone-Pacifico,
  Verdinelli, and Wasserman]{GPVW2}
C.~R. Genovese, M.~Perone-Pacifico, I.~Verdinelli, and L.~Wasserman.
\newblock {Manifold estimation and singular deconvolution under Hausdorff
  loss}.
\newblock \emph{The Annals of Statistics}, 40\penalty0 (2):\penalty0 941--963,
  2012{\natexlab{c}}.

\bibitem[Hastie and Stuetzle(1989)]{HasStuet}
T.~Hastie and W.~Stuetzle.
\newblock {Principal curves}.
\newblock \emph{Journal of the American Statistical Association}, 84:\penalty0
  502--516, 1989.

\bibitem[K\'{e}gl et~al.(2000)K\'{e}gl, Krzy\.zak, Linder, and Zeger]{KKLZ}
B.~K\'{e}gl, A.~Krzy\.zak, T.~Linder, and K.~Zeger.
\newblock {Learning and design of principal curves}.
\newblock \emph{IEEE Transactions on Pattern Analysis and Machine
  Intelligence}, 22:\penalty0 281--297, 2000.

\bibitem[Prokhorov(1956)]{Prokh}
Y.~Prokhorov.
\newblock Convergence of random processes and limit theorems in probability
  theory.
\newblock \emph{Theory of Probability \& Its Applications}, 1:\penalty0
  157--214, 1956.

\bibitem[Tibshirani(1992)]{Tib}
R.~Tibshirani.
\newblock Principal curves revisited.
\newblock \emph{Statistics and Computing}, 2:\penalty0 183--190, 1992.

\end{thebibliography}
\bibliographystyle{plainnat}

\end{document}